\crefname{hypothesis}{Hypothesis}{Hypotheses}
\title{A Distributions-based Approach to Data-Consistent Inversion}
\author{Kirana O.~Bergstrom\thanks{Department of Mathematical and Statistical Sciences, University of Colorado - Denver, Denver, CO 
  (\email{kirana.bergstrom@ucdenver.edu}, \url{http://www.imag.com/\string~ddoe/}).}
\and Troy D.~Butler \thanks{Department of Mathematical and Statistical Sciences, University of Colorado - Denver, Denver, CO 
  (\email{troy.butler@ucdenver.edu}).}
\and Tim Wildey\thanks{Computational Mathematics Department, Center for Computing Research, Sandia National Labs, Albuquerque, NM. 
Sandia National Laboratories is a multimission laboratory managed and operated by National Technology \& Engineering Solutions of Sandia, LLC, a wholly owned subsidiary of Honeywell International Inc., for the U.S. Department of Energy’s National Nuclear Security Administration under contract DE-NA0003525.
  (\email{tmwilde@sandia.gov})}
}
\newcommand{\multiline}[1]{%
    \begin{tabularx}{\dimexpr\linewidth-\ALG@thistlm}[t]{@{}X@{}}
        #1
    \end{tabularx}
}
\newenvironment{routine}[1][htb]{%
  \renewcommand{\ALG@name}{Routine}% Update algorithm name
  \begin{algorithm}[#1]%
  }{\end{algorithm}
}
\newcommand{\abs}[1]{\left\vert#1\right\vert}
\newcommand{\set}[1]{\left\{#1\right\}}
\newcommand{\seq}[1]{\left\{#1\right\}_{n\in\mathbb{N}}}
\newcommand{\pspace}{\Lambda}
\newcommand{\dspace}{\mathcal{D}}
\newcommand{\dmeas}{\mu_{\dspace}}
\newcommand{\pborel}{\mathcal{B}_{\pspace}}
\newcommand{\dborel}{\mathcal{B}_{\dspace}}
\newcommand{\predsamples}{\{\bm{q}^i\}_{i=1}^n}
\newcommand{\initsamples}{\{\bm{\lambda}^i\}_{i=1}^n}
\newcommand{\weightset}[1]{\{#1_i\}_{i=1}^n}
\newcommand{\preddens}{\pi_{\text{pred}}}
\newcommand{\obsdens}{\pi_{\text{obs}}}
\newcommand{\initdens}{\pi_{\text{init}}}
\newcommand{\updens}{\pi_{\text{update}}}
\newcommand{\predmeas}{P_{\text{pred}}}
\newcommand{\obsmeas}{P_{\text{obs}}}
\newcommand{\initmeas}{P_{\text{init}}}
\newcommand{\upmeas}{P_{\text{update}}}
\newcommand{\predcdf}{F_{\text{pred}}}
\newcommand{\obscdf}{F_{\text{obs}}}
\newcommand{\initcdf}{F_{\text{init}}}
\newcommand{\upcdf}{F_{\text{update}}}
\newcommand{\prededf}{F_{\text{pred}}^n}
\newcommand{\predwedf}{F_{\text{pred};\bm{w}}^n}
\newcommand{\initwedf}{F_{\text{init};\bm{w}}^n}
\newcommand{\initwemeas}{P_{\text{init};\bm{w}}^n}
\newcommand{\predwedfpart}{F_{\text{pred};\bm{u}}^{n,p}}
\newcommand{\initwedfpart}{F_{\text{init};\bm{u}}^{n,p}}
\newcommand{\predwemeaspart}{P_{\text{pred};\bm{u}}^{n,p}}
\newcommand{\initwemeaspart}{P_{\text{init};\bm{u}}^{n,p}}
\begin{document}

\maketitle

% REQUIRED
\begin{abstract}
We formulate a novel approach to solve a class of stochastic problems, referred to as data-consistent inverse (DCI) problems, which involve the characterization of a probability measure on the parameters of a computational model whose subsequent push-forward matches an observed probability measure on specified quantities of interest (QoI) typically associated with the outputs from the computational model.
Whereas prior DCI solution methodologies focused on either constructing non-parametric estimates of the densities or the probabilities of events associated with the pre-image of the QoI map, we develop and analyze a constrained quadratic optimization approach based on estimating push-forward measures using weighted empirical distribution functions.
The method proposed here is more suitable for low-data regimes or high-dimensional problems than the density-based method, as well as for problems where the probability measure does not admit a density.
Numerical examples are included to demonstrate the performance of the method and to compare with the density-based approach where applicable.
\end{abstract}

% REQUIRED
\begin{keywords}
data-consistent inversion, stochastic inverse problems, uncertainty quantification, quadratic optimization.
\end{keywords}

% REQUIRED
\begin{MSCcodes}
28A50, 65K10, 62G07 
\end{MSCcodes}

\section{Introduction}
The formulation and solution of an inverse problem impacts the type of information revealed about parameters of a model from observed data.
For instance, inverse problems can be posed deterministically and solved using optimization methods~\cite{vogel,kirsch} to determine ``parameters of best fit'' in a particular sense.
Some stochastic inverse problems seek a quantification of epistemic uncertainty in likely parameter values utilizing Bayesian methods~\cite{stuart_2010,idier}.
In this work, we seek to quantify aleatoric uncertainties about the parameters of the model from probabilistic information of the observed data.
In other words, we seek a probability measure of the model inputs (i.e., the model parameters) given a probability measure that characterizes uncertainties of the model outputs.
We require the solution to this inverse problem to be \textit{data-consistent}, meaning that the push-forward of the probability measure on the parameter space matches a given target distribution on the observed data space.
This type of inverse problem naturally arises in scenarios where observed variability in data is primarily due to intrinsic variability in the model inputs.

\subsection{Comparison to related inverse problem work}
A probability distribution can be characterized by its cumulative distribution function (CDF), measure, or, if it exists, its density (i.e., its Radon-Nikodym derivative).
Previous approaches to solving the data-consistent inverse (DCI) problem either approximate a pullback measure directly through event approximation in both the input and output spaces as in~\cite{measures} or use a non-parametric kernel density approximation in the output space~\cite{Butler2018}.
These approaches are challenging to implement when the number of simulated data are limited, e.g., when the computational model is expensive to evaluate.
Moreover, there is an implicit assumption that sufficient information exists on observations to specify either a measure or density.
The method described in this paper circumvents these issues by directly approximating a CDF via an optimally-weighted empirical distribution function (EDF).
The proposed method does not require the existence of densities for any of the distributions involved, and it does not rely on kernel density estimation methods, which may be unreliable in low-data regimes and are computationally expensive in high-dimensional spaces.

\subsection{Contributions}
The contributions of this work are both algorithmic and theoretical.
The method we propose builds upon the approach introduced and analyzed in~\cite{Amaral2017,doi:10.2514/6.2019-0967} for approximating push-forward measures by performing a change-of-measure objective using EDFs via solution to an optimization problem that determines optimal weights on the simulated output samples.
These weights minimize the $L^2$-norm between the weighted EDF and a given target distribution function and are guaranteed to exist since they are the result of a strictly convex quadratic optimization problem.
Thus, for the case where simulated data are limited, a solution is achievable that is guaranteed optimal in an $L^2$-sense.
Moreover, the target distribution may be in the form of either a CDF or an EDF, which guarantees a solution to this approach even in cases further limited by availability of observational data.

The key to building upon the optimization-based approach for push-forward EDFs to solve the DCI problem is through the addition of a critical binning step in the output space. 
This provides a practical partitioning of the observed space prior to solution of the optimization problem that permits the proper distribution of weights in the input space.
We both prove and demonstrate numerical convergence of the optimization-based solution to the DCI solution.

\subsection{Organization}
Background and previous approaches for DCI are described in~\Cref{sec:dci}.
\Cref{sec:cdfs} describes the optimization-based approach.
Convergence and other theoretical results are discussed in~\Cref{sec:theory} (proofs are found in the appendices). 
\Cref{sec:apps} contains applications and examples.
Conclusions and future directions follow in~\Cref{sec:conclusions}.
\Cref{sec:code} provides details on obtaining the code utilized to produce the results in this work and~\Cref{sec:ack} contains acknowledgements.

\section{Data-Consistent Inversion (DCI)}\label{sec:dci}
We summarize the concepts of data-consistent inversion (DCI), and direct the interested reader to~\cite{Butler2018} for more details.

\subsection{Terminology and Notation}

Let $\bm{\lambda}\in\Lambda$ denote the uncertain parameters in a model for which the goal is to estimate a distribution given a particular distribution on Quantities of Interest (QoI) corresponding to model outputs. 
Let $Q$ denote the QoI map from the parameter space, $\pspace$, to the data space, $\dspace$, i.e., $Q(\pspace)=\dspace$.
We equip $\Lambda$ and $\mathcal{D}$ with $\sigma$-algebras $\mathcal{B}_\Lambda$ and $\mathcal{B}_{\mathcal{D}}$ as well as measures $\mu_\Lambda$ and $\mu_{\mathcal{D}}$, respectively.
These measures are dominating measures that allow us to express probability measures defined on these spaces as Radon-Nikodym derivatives. 
While not necessary, it is often the case that $\Lambda\subset\mathbb{R}^p$ and $\mathcal{D}\subset\mathbb{R}^d$, $\mathcal{B}_\Lambda$ and $\mathcal{B}_\mathcal{D}$ are Borel $\sigma$-algebras, and $\mu_\Lambda$ and $\mu_\mathcal{D}$ are Lebesgue measures.
In that case, the Radon-Nikodym derivatives are referred to as probability density functions or more simply as densities.

Due to the variability in the parameters, the QoI follow a distribution.
The distribution of these observations defines the \textit{observed distribution}, and corresponds to a probability measure, $\obsmeas$, on the measurable space $(\mathcal{D}, \mathcal{B}_\mathcal{D})$.
The problem of determining a distribution of parameters that could have produced the observed distribution is a stochastic inverse problem that that can be solved using DCI.
In mathematical terms, the problem is stated as; given an observed measure $\obsmeas$ on $(\mathcal{D}, \mathcal{B}_\mathcal{D})$, we seek a measure $P_\Lambda$ on $(\Lambda, \mathcal{B}_\Lambda)$ such that for $A\in \mathcal{B}_\mathcal{D}$,
\begin{equation}\label{eq:PF-measure}
P_\Lambda(Q^{-1}(A)) = \obsmeas(A).
\end{equation}
Such a measure is $P_\Lambda$ is called a \textit{pullback} of $\obsmeas$, and $\obsmeas$ is the \textit{push-forward} of $P_\Lambda$.
Note that we are using the common measure-theoretic shorthand notation $Q^{-1}$ to denote the \textit{pre-image} map (i.e., we are not assuming $Q$ is invertible). 

We emphasize that the stochastic inverse problem is often ill-posed, i.e., many pullback measures may exist.
Even in cases where the dimension of $\mathcal{D}$ equals or even exceeds the dimension of $\Lambda$, nonlinear $Q$ will often result in non-uniqueness of solutions.
Section~\ref{sec:density} below summarizes both a theoretical and practical approach for constructing a particular solution under some additional assumptions.

%%%%%%%%%%%%%%%%%%%%%%%%%%%%%%%
\subsection{Density-Based Solutions}
\label{sec:density}
%%%%%%%%%%%%%%%%%%%%%%%%%%%%%%%
Here, we summarize the theoretical construction of the ``density-based'' solution in the more general context of Radon-Nikodym derivatives of probability measures.
We use the term ``density-based'' because it is frequently the case that the derivatives are densities as stated above.

If $\obsmeas$ admits a Radon-Nikodym derivative with respect to $\mu_\mathcal{D}$, it is denoted $\obsdens$.
We then re-frame the stochastic inverse problem in terms of Radon-Nikodym derivatives where~\eqref{eq:PF-measure} is rewritten as
\begin{equation}\label{eq:PF-densities}
P_\Lambda(Q^{-1}(A)) = \int_{Q^{-1}(A)} \pi_\Lambda \, d\mu_\Lambda = \int_A \obsdens\, d\mu_\mathcal{D}= \obsmeas(A).
\end{equation}
In other words, the goal is to determine a pullback of $\obsmeas$ in terms of a Radon-Nikodym derivative with respect to $\mu_\Lambda$, which we denote here by $\pi_\Lambda$.

Clearly, \eqref{eq:PF-measure} dictates some restrictions on the structure of a solution, $\pi_\Lambda$, in terms of its aggregate behavior on sets defined by $Q^{-1}$, but this QoI map provides no information as to how $\pi_\Lambda$ should behave \textit{within} a set $Q^{-1}(\bm{q})$ for a given $\bm{q}\in\mathcal{D}$.

The key theoretical tool for constructing a solution to the stochastic inverse problem is the disintegration theorem~\cite{disintegration}.
Below, we summarize a version for probability measures that describes the \textit{unique} decomposition of any probability measure defined on $(\Lambda,\mathcal{B}_\Lambda)$ in terms of its associated push-forward and a family of conditional probability measures. 

\begin{theorem}\label{thm:disintegration}[Disintegration Theorem] Assume $Q : \Lambda  \rightarrow  \mathcal{D}$  is $\mathcal{B}_\Lambda$-measurable, $P_\Lambda$  is a probability measure on $(\Lambda ,\mathcal{B}_\Lambda)$, and $P_\mathcal{D}$  is the push-forward measure of $P_\Lambda$  on $(\mathcal{D} ,\mathcal{B}_\mathcal{D})$. There exists a $P_\mathcal{D}$-almost everywhere uniquely defined family of conditional probability measures $\{ P_{\bm{q}}\}_{{\bm{q}}\in \mathcal{D}}$ on $(\Lambda ,\mathcal{B}_\Lambda)$ such that for any $A \in \mathcal{B}_\Lambda$,
\begin{equation*}
P_{\bm{q}}(A) = P_{\bm{q}}(A \cap  Q^{-1}(\bm{q})),
\end{equation*}
so $P_{\bm{q}}(\Lambda  \setminus  Q^{-1}(\bm{q})) = 0$, and there exists the following disintegration of $P_\Lambda$, 
\begin{equation*}
P_\Lambda (A) = \int_\mathcal{D}P_{\bm{q}}(A) dP_\mathcal{D} (\bm{q}) = \int_\mathcal{D}\left(\int_{A\cap Q^{-1}(\bm{q})}dP_{\bm{q}}(\bm{\lambda}) \right)dP_\mathcal{D} (\bm{q}),
\end{equation*}
for $A \in \mathcal{B}_\Lambda$.
\end{theorem}

From Theorem~\ref{thm:disintegration}, it is self-evident that $P_\Lambda$ is a solution to the stochastic inverse problem if its push-forward $P_\mathcal{D}$ is equal to $\obsmeas$. 
While this gives some mechanism for checking if a proposed probability measure is a solution to the stochastic inverse problem, it fails to address how to construct $P_\Lambda$ since the family of conditional probability measures remains undefined. 
To address this, we utilize an initial probability measure $\initmeas$ on $(\Lambda, \mathcal{B}_\Lambda)$.
The disintegration of this measure is used to construct the unknown family of conditional probability measures.
The push-forward of $\initmeas$ defines a measure we refer to as the \textit{predicted} probability measure, $\predmeas$.

In \cite{Butler2018}, it is shown that if the initial, observed, and predicted distributions admit Radon-Nikodym derivatives, and if the observed measure $\obsmeas$ is absolutely continuous with respect to the predicted measure $\predmeas$, then Theorem~\ref{thm:disintegration} implies a solution to the stochastic inverse problem is given by
\begin{equation}\label{eq:update}
\upmeas(A) := \int_{\mathcal{D}} \left(\int_{A\cap Q^{-1}(\bm{q})} \frac{\initdens(\bm{\lambda})}{\preddens(Q(\bm{\lambda}))}d\mu_{\Lambda,\bm{q}}\right)\obsdens(\bm{q})d\mu_\mathcal{D},
\end{equation}
for all $A \in \mathcal{B}_\Lambda$, where $\{\mu_{\Lambda,{\bm{q}}}\}_{{\bm{q}} \in \mathcal{D}}$ comes from the disintegration of the measure $\mu_\Lambda$, $\preddens$ is the Radon-Nikodym derivative of $\predmeas$, and the term $\initdens(\bm{\lambda})/\preddens(Q(\bm{\lambda}))$ defines the conditional density associated with $Q^{-1}(\bm{q})$ for each $\bm{q}\in\dspace$. 
For further details of this derivation, and proof that such a solution is data-consistent, we refer the reader to \cite{Butler2018}.

The term $\obsdens(\bm{q})$ in~\eqref{eq:update} can be brought into the inner integral by rewriting it as $\obsdens(Q(\bm{\lambda}))$. 
It follows that the data-consistent solution is given by integrating the following Radon-Nikodym derivative over $\Lambda$, 
\begin{equation}\label{eq:density}
\updens(\bm{\lambda}):=\initdens(\bm{\lambda})\frac{\obsdens(Q(\bm{\lambda}))}{\preddens(Q(\bm{\lambda}))} =\initdens(\bm{\lambda}) r(\bm{\lambda})
\end{equation}
where, by a standard result involving changes of measures and Radon-Nikodym derivatives\footnote{cf.~Proposition 3.9(a) in \cite{Folland_book}.}, the ratio $r(\bm{\lambda}) = \frac{\obsdens(Q(\bm{\lambda}))}{\preddens(Q(\bm{\lambda}))}$ defines the Radon-Nikodym derivative of $\obsmeas$ with respect to $\predmeas$ and serves to re-weight the initial likelihoods of parameters.
It is worth noting that for each datum $\bm{q}\in\dspace$ that $r(\bm{\lambda})$ is \textit{constant} for all $\bm{\lambda}\in Q^{-1}(\bm{q})$.
In other words, this re-weighting updates the initially assumed likelihoods $Q^{-1}(\bm{q})$ for each $\bm{q}\in\dspace$ without updating the initially assumed conditional likelihoods of points within such sets. 
It is this observation that leads to the reference of the solution given in~\eqref{eq:update} and~\eqref{eq:density} as an \textit{update} to the initial distribution. 

\subsection{Numerical Approximation of Densities}
Even when spaces are nominally infinite-dimensional, practical considerations and discretizations often result in $\pspace\subset\mathbb{R}^p$ and $\dspace\subset\mathbb{R}^d$ for some finite $p$ and $d$ so that the Radon-Nikodym derivatives (with the exception of $r(\bm{\lambda})$) are densities as previously mentioned.
We are often confronted with the situation where these densities must be approximated from a finite set of (typically iid) samples from the observed distribution along with a finite set of (typically iid) samples from the initial distribution and their associated predicted values. 
Below, we describe a typical approach based on direct estimation of the observed and predicted densities.

Suppose we have $m$ data points $\{\bm{q}^1, \dots, \bm{q}^m\}$, equal to $\{Q(\bm{\lambda}^1)),\dots,Q(\bm{\lambda}^m))\}$, where $\set{\bm{\lambda}^1, \dots, \bm{\lambda}^m}$ are unobserved.
Here, we take a moment to clearly define the use of subscript and superscript indices throughout this work.
Since any collection of samples may denote a collection of vectors, we denote each sample within the collection using superscripts.
Any index subscript denotes a component of a vector; i.e., the $i$th sample of a $d$-dimensional QoI is denoted as $\bm{q}^i = [q_1^i,\dots,q_d^i]^\top$.

Any parametric or non-parametric density-estimation method can be applied to estimate $\obsdens$ from these $m$ samples. 
Some popular non-parametric density-estimation techniques utilized in the literature are Kernel Density Estimation (KDE) \cite{Devroye1985, Chen17}, Normalizing Flows (NFs) \cite{PNR+21}, and Dirichlet-process based mixture models \cite{EW95, Neal00}.

We emphasize that even if the initial density is given exactly, the predicted density is not typically known except for the simplest of QoI maps and initial distributions.
In practice, the predicted density, $\preddens$, is approximated by propagating $n$ iid samples from the initial distribution, $\set{\bm{\lambda}^1,\dots,\bm{\lambda}^n}$, through the map $Q$, to create a set of iid samples from the (unknown) predicted distribution $\{Q(\bm{\lambda}^1)=\bm{q}^1,\dots,Q(\bm{\lambda}^n) = \bm{q}^n\}$ for which any density estimation technique can be applied. 
As previously mentioned, the theoretical existence of $\updens$ assumes $\obsmeas$ is absolutely continuous with respect to $\predmeas$.
However, in practice, we often make a stronger assumption that allows us to utilize this set of iid samples from the initial distribution to create either an estimate of $\updens$ or apply rejection sampling to this set to generate an iid set of samples from the updated distribution.
We refer the interested reader to \cite{Butler2018} for more details, and simply restate the stronger form below.
\begin{assumption}[Strong form of the Predictability Assumption]\label{ass:predictability}
There exists a constant $C > 0$ such that for almost every $\bm{q} \in \mathcal{D}$, $\obsdens(\bm{q}) \le C \preddens(\bm{q})$.
\end{assumption}

To verify this assumption is satisfied in practice, we utilize a quantitative diagnostic that we summarize below.
If the predictability assumption is satisfied, then $\updens$ given in~\eqref{eq:density} defines a density, which implies
\begin{equation}\label{eq:diagnostic}
    1 = \int_\pspace \updens(\bm{\lambda})\, d\mu_\pspace = \int_\pspace \initdens(\bm{\lambda})r(\bm{\lambda})\, d\mu_\pspace =: \mathbb{E}_\text{init} (r(\bm{\lambda})).
\end{equation}
Thus, in practice, we verify the predictability assumption holds by computing the sample average of $r(\bm{\lambda})$ on a random sample drawn from the initial distribution, and comparing this to a value of unity. 

%%%%%%%%%%%%%%%%%%%%%%%%%%%%%%%%%%%%

\subsection{An illustrative example}\label{sec:heat_eq_ex}
Consider a manufacturing process for creating metal alloy rods to be used in a high-temperature environment such as welding.
Inconsistencies in the manufacturing process, such as the mechanical processes for the cutting, measuring, and smoothing of the ends of the rods, coupled with inconsistencies in the alloy, results in variations in rod lengths, $\ell$, and thermal diffusivities, $\kappa$.
For this example, assume that $\ell\in [1.9,2.1]$ and $\kappa \in [0.5,1.5]$ are both dimensionless.
Further assume that the objective is to determine the distribution of $\ell$ and $\kappa$ prior to the utilization of the rods in an engineered system whose functional safety is dependent on these parameters.
Using the notation above, we let $\bm{\lambda} = (\ell, \kappa)$ and $\Lambda=[1.9,2.1]\times[0.5,1.5]\subset\mathbb{R}^2$.

We now define a model of an experimental procedure that leads to a QoI map between the measure spaces $(\Lambda, \mathcal{B}_\Lambda, \mu_\Lambda)$ and $(\mathcal{D}, \mathcal{B}_\mathcal{D}, \mu_\mathcal{D})$.
Assuming that the rods are long and thin, the temperature of the rod at spatial location $0<x<\ell$ and time $t>0$ is modeled by the one-dimensional heat equation
\begin{equation*}\left\{
\begin{matrix}
\frac{\partial}{\partial t}u(x,t) = \kappa \frac{\partial^2}{\partial x^2} u(x,t), & 0 < x < \ell,\\
u(0,t) = u(\ell, t) = 0, & t > 0, \\
u(x,0) = x, & 0 < x < \ell,
\end{matrix}
\right.
\end{equation*}
where $u$ is (dimensionless) temperature.
For the sake of illustration and simplicity, we consider homogeneous Dirichlet boundary conditions, where ``$0$'' is the ambient temperature, throughout the experiment.
The initial temperature profile $u(x,0)=x$ is chosen to provide an interesting response over both space-time and the parameter space.
The analytic solution for the temperature is given by
\begin{equation}\label{eq:an_sol}
u(x,t;\bm{\lambda}) = \frac{2\ell^2}{\pi}\sum^\infty_{k=1}\frac{(-1)^{k+1}}{k}\exp\left[-\kappa \frac{k\pi}{\ell^2}t\right]\sin\left(\frac{k\pi x}{\ell}\right).
\end{equation}
%where we have made the dependency on the variable parameters $\ell$ and $\kappa$ explicit.
Assume for each experiment that we have a single sensor that can accurately and precisely measure the temperature at a specific point $x^\star=1.2$ along the rod and point $t^\star=0.01$ in time.
In this case, $Q(\bm{\lambda}) = u(x^\star,t^\star;\bm{\lambda})$ defines the QoI map from $\Lambda$ to $\mathcal{D}\subset\mathbb{R}$.
For practical computations, we truncate this sum after the $100$th term.
%We place the sensor at $x^* = 1.2$, and record the measurement at $t^* = 0.01$.
%Figure \ref{fig:he_sol} shows exact solutions for a rod of length $\ell = 2$ and diffusivity constant $\kappa = 1$.
Note that for this example the QoI is defined by a single temperature measurement in space-time so that $\mathcal{D}\subset\mathbb{R}$ whereas the parameter space $\Lambda\subset\mathbb{R}^2$.
Subsequently, there are infinitely many possible combinations of $\ell$ and $\kappa$ in the contour defined by $Q^{-1}(\bm{q})$ for a given datum $\bm{q}\in\mathcal{D}$, and the conditional probability for these $(\ell,\kappa)$ pairs is not specified by~\eqref{eq:PF-measure}.
See Fig.~\ref{fig:contours} for an illustration.
\begin{figure}
\centering
\includegraphics[width=0.6\textwidth]{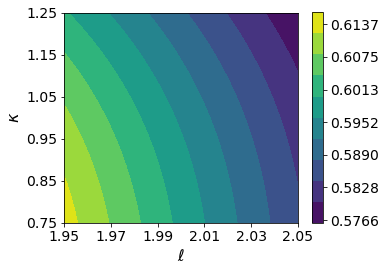}
\caption{Contours for the map $Q$ in the illustrative example. Note the distinct ``contour sets'' whose probabilities are uniquely defined by $\obsmeas$. However, the probabilities \textit{within} these contour sets cannot be determined by $\obsmeas$.}
\label{fig:contours}
\end{figure}
For the sake of simplicity, to generate the observed samples, we assume $\obsdens$ is normal and take $m$ samples from it.
This simulates a typical scenario where we do not know the exact observed distribution but instead have access to samples drawn from this distribution.
Here, we utilize a Gaussian KDE to estimate $\obsdens$ from the data. 

\begin{figure}
\centering
\includegraphics[width=0.8\textwidth]{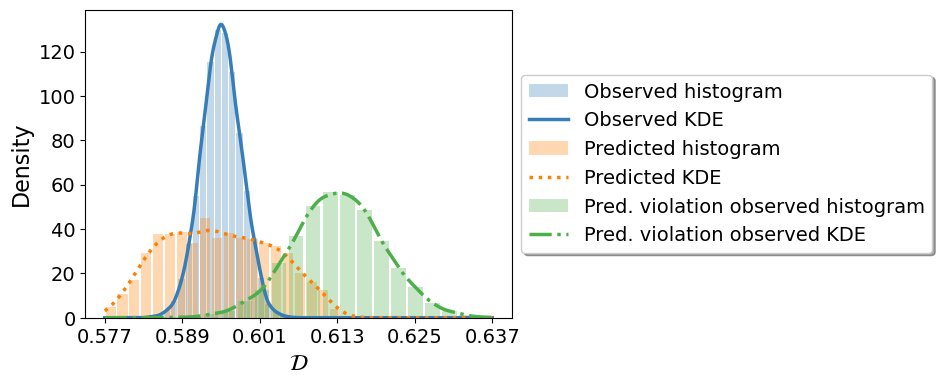}
\caption{Predicted and observed histograms for the illustrative example and their estimated KDEs. We also show an alternate possible observed distribution, labeled ``Pred. violation observed'' because for this example, Assumption~\ref{ass:predictability} is violated: this observed distribution is not absolutely continuous w.r.t. the predicted.}
\label{fig:dists}
\end{figure}

We assume a uniform initial distribution $\pspace$, and propagate $n=2E3$ iid samples from this distribution to produce samples from the predicted distribution.
Since $\dspace$ is dimensionless, we can either visually confirm Assumption~\ref{ass:predictability} by comparing the observed and predicted density approximations in Fig.~\ref{fig:dists}, or compute the diagnostic, which to five significant digits is $\mathbb{E}_\text{init}(r(\bm{\lambda}))\approx 1.0064$. 
We pause here to demonstrate the usefulness of this diagnostic.
For the predictability-violating observed distribution shown in Fig \ref{fig:dists}, the diagnostic is $\mathbb{E}_\text{init}(r(\bm{\lambda}))\approx 0.5118$.
For more complex examples when the distributions are not easily visualized, the diagnostic provides a reliable way to verify if Assumption~\ref{ass:predictability} is satisfied.

With Assumption~\ref{ass:predictability} verified, we move onto interrogating $\updens$ via the weights $r(\bm{\lambda})$ available on the set of initial samples.
In the left plot of Fig.~\ref{fig:dens_method}, we visualize $r(\bm{\lambda})$ as a function on the set of initial samples, while in the right plot we show an iid set of samples for $\updens$ generated via rejection sampling based on these $r(\bm{\lambda})$ values. 

\begin{figure}
\centering
{\includegraphics[height=0.38\textwidth]{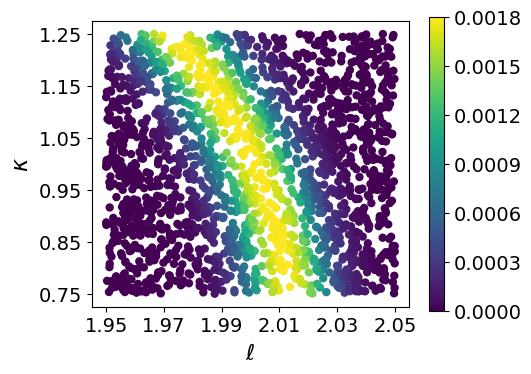}}
{\includegraphics[height=0.38\textwidth]{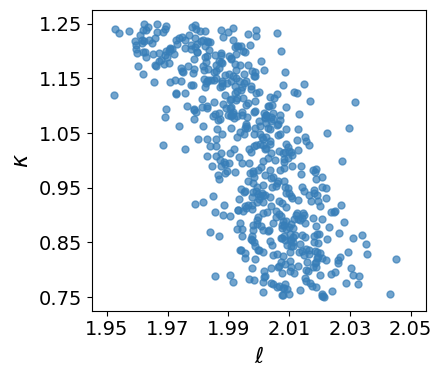}}
\caption{Result of density-based method for data-consistent inversion. On the left we show density-based weights plotted on the initial samples. The result of rejection sampling is shown on the right.}
\label{fig:dens_method}
\end{figure}

\section{Empirical Distributions and Optimal Estimation}\label{sec:cdfs}

The density-based method for solving the DCI problem from Section~\ref{sec:dci} requires the initial, predicted, and observed distributions to all admit densities.
Moreover, when any of these densities are estimated, a sufficient number of samples must exist to approximate them, e.g., using KDEs.
In practice, the number of observed or simulated samples may be limited due to either high experimental or computational cost in obtaining each sample. 
Moreover, utilizing density estimation methods may fail %address scenarios where 
if even a single one of the densities (initial, predicted, or observed) fails to exist.
We therefore propose an alternative method based on empirical distribution functions (EDFs), which always exist as approximations to cumulative distribution functions (CDFs) that are in 1-to-1 correspondence with probability measures. 

An attractive feature of the density method is that all of the critical computations take place in the data space, which is generally lower-dimensional than the parameter space.
Specifically, the densities of the predicted and observed distributions are estimated, and a ratio of densities is computed, all in the data space.
While the ratios are then applied to samples in the parameter space, the computations are restricted to the data space.
We pursue a distribution-based approach with a similar feature.
The specific method we propose, in Section~\ref{sec:cdfs_partition}, is motivated from the empirical importance weights for a change of probability measure method described in \cite{Amaral2017}.
In \cite{Amaral2017}, iid samples from an unknown \textit{proposal} distribution are reweighted using an optimization-based method to perform a change-of-measure from the proposal to a \textit{target} distribution that itself may only be given in terms of a set of iid samples.
We summarize this method, using our notation and terminology, in Section~\ref{sec:cdfs_forward}.

\subsection{Computation of optimal weights on the data space}\label{sec:cdfs_forward}

For all but the simplest of QoI maps and initial distributions, the predicted distribution is unknown and must be estimated.
We assume access to a set of $n$ iid samples $\predsamples$ to estimate the predicted empirical CDF as
\begin{equation*}
\prededf(\bm{q}) =\frac{1}{n}\sum^n_{i=1}\mathbb{I}(\bm{q} \preceq \bm{q}^i),
\end{equation*}
which converges almost everywhere to $\predcdf$ as $n \to \infty$, e.g., see Theorem 20.6 in \cite{Billingsley2012}.
Note that because the samples are vectors in $\mathbb{R}^d$ space, we use the symbols $\preceq$ and $\succeq$ to denote element-wise less than and element-wise greater than, respectively.
For example if $\bm{x} \preceq \bm{y}$, then $x_1 \le y_1$, $x_2 \le y_2, \dots, x_d \le y_d$.

An EDF over $n$ samples can be weighted with a set of $n$ scalars $\weightset{w}$ to form a \textit{weighted EDF}, as
\begin{equation}\label{eq:predwedf}
\predwedf(\bm{q}) =\frac{1}{n}\sum^n_{i=1}w_i\mathbb{I}(\bm{q} \preceq \bm{q}^i),
\end{equation}
where we denote $\bm{w} = [w_1,\dots,w_n]^\top$ as the $n$-dimensional vector of the scalar weights with the constraint that $w_i \ge 0$ for $1 \le i \le n$ and $\frac{1}{n}\sum^n_{i=1}w_i = 1$ so that $\predwedf$ defines a valid probability distribution. 

In \cite{Amaral2017}, it is proposed that the weights should minimize the quadratic function
\begin{equation*}
\frac{1}{2}\int_\mathcal{D} \left(\predwedf(\bm{q}) - \obscdf(\bm{q})\right)^2 d\mu_\mathcal{D},
\end{equation*}
which is the square of the $L^2$-norm of the difference between the weighted predicted EDF and the true observed CDF.
Moreover, \cite{Amaral2017} proves that this choice of $\bm{w}$ defines a distribution with a CDF that converges in both $L^1$ and $L^2$ to the observed CDF $\obscdf$.

We note here that, as in \cite{Amaral2017}, we can, without loss of generality, assume that the support of the predicted sample distribution is contained within the $d$-dimensional hypercube. % Kirana note: do we need to note this at this point? Since we're not really discussing the samples being from a proposal.
If this is not the case, we simply scale a bounding box for the samples from the predicted distribution using either the known (assumed compact) support of distribution, or the minimum and maximum component-wise values within the sample set, and the integrand by the same factor.
The scaled problem is equivalent to the unscaled version and the optimal $\bm{w}$ is the same.
To find the minimizing weights $\bm{w}$, we solve a standard-form quadratic program (QP) with linear constraints.
This is summarized in Routine~\ref{rout:QP} (see \cite{Amaral2017} for derivation details).
In Routine~\ref{rout:QP}, we use $\ell$ instead of $n$ for the number of samples because we consider two different algorithms in the following subsections that involve different numbers of samples. 
We also use $F_\text{targ}$ instead of the CDF $\obscdf$ since an EDF that approximates this CDF may also be used as discussed below.

\begin{routine}
\caption{$QP(\{\bm{q}^i,\ldots, \bm{q}^\ell\}\subset\mathbb{R}^d, F_{\text{targ}})$}
\begin{algorithmic}[1]
\label{rout:QP}
% \STATE Set $m \in\mathbb{N}$, $F_{\text{targ}}$, and $\{\bm{x}^i\in \mathbb{R}^d\}_{i=1}^m$.
% \hfill $\triangleright$ Input
\STATE Set $H \in \mathbb{R}^{\ell \times \ell}$ and $\bm{b} \in \mathbb{R}^{\ell}$ as
\begin{equation*}
H_{ij} := \frac{1}{\ell^2}\prod^{d}_{k=1}\int_{z_k^{i,j}}^1 \, dq_k, \quad b_i := \frac{1}{\ell}\prod^d_{k=1}\int_{q_k^i}^1F_{\text{targ}}(\bm{q})\, dq_{k}
\end{equation*}
where $z_k^{i,j} = \max\{q_k^i, q_k^j\}$.
\STATE Solve for $\bm{w}=[w_1,\dots,w_\ell]^\top$:
\begin{equation}\label{eq:qp}
\begin{tabular}{c  c}
\text{minimize} & $\bm{w}^TH\bm{w} - b^T\bm{w}$\\
\text{subject to} & $\bm{w} \succeq \bm{0}$ \\
 & $\frac{1}{\ell}\sum^\ell_{i=1}w_i = 1$.
\end{tabular}
\end{equation}
\STATE Return $\bm{w}$.
\hfill $\triangleright$ Output
\end{algorithmic}
\end{routine}

The matrix $H$ constructed in the above routine is symmetric positive-definite so the QP has a unique solution \cite{cvx}. 
We emphasize that the matrix $H$ is dense and of order $\ell$, meaning that the computational complexity of the QP increases as the number of samples does.
However, the complexity of the QP is independent of the dimension of the problem.
In Step 2, any algorithm or package for solving QPs with linear constraints should suffice, but in this work we solve the QP using the convex optimization python package \textit{cvxopt} \cite{cvxopt}.
Running $QP(\set{\bm{q}^1,\ldots,\bm{q}^n},\obscdf)$ returns the vector of weights $\bm{w}\in\mathbb{R}^n$ such that $\predwedf$ defined in~\eqref{eq:predwedf} is the $L^2$-optimal estimate of $\obscdf$.
If the exact target CDF $F_{\text{targ}}$ is not known, as in the case when we have samples from the observed distribution instead of an exact form, we can apply the method using the EDF $F_{\text{targ}}^m$ for a set of $m$ iid samples from the observed distribution.
In that case, letting $\set{\bm{y}^1,\ldots,\bm{y}^m}$ denote the $m$ samples of the target distribution, the $b_i$ computation in Step 1 reduces to
\begin{equation*}
    b_i = \frac{1}{\ell}\prod^d_{k=1}\int_{q_k^i}^1F^m_{\text{targ}}(\bm{q})\, dq_{k} = \frac{1}{\ell}\prod^d_{k=1}\int_{q_k^i}^1 \frac{1}{m}\sum_{j=1}^m \mathbb{I}(\bm{y}^j \preceq \bm{q}) \, dq_k
\end{equation*}
It is shown in \cite{Amaral2017} that the resulting weighted EDF converges in the $L^1$- and $L^2$-norms to $F_{\text{targ}}$ as $\ell,m\to\infty$.
Applying Routine~\ref{rout:QP} to the illustrative example with $\ell=n=2E2$ samples (chosen to demonstrate the step-function nature of the solution), where the target distribution is an EDF of a normal distribution (using $m=1E4)$ results in the distribution approximation shown in Fig.~\ref{fig:data_space_compare}.
For the sake of comparison, we also show the results in the data space for the density-based method with $n=2E2$, i.e., we plot the densities for predicted, observed, and push-forward of the updated distribution. In both plots, we observe that the estimated predicted distribution is a poor estimate of the observed distribution whether it is represented as an EDF (left plot) or density (right plot).
The different re-weighting schemes both produce a change of measure resulting in an improved estimate of the observed distribution.
In the subsections below, we explore how to utilize the QP based re-weighting scheme to construct a weighted EDF approximation of the updated distribution. 
\begin{figure}
\centering
{\includegraphics[height=0.35\textwidth]{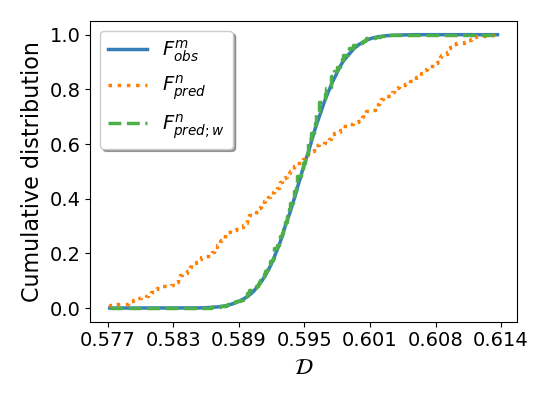}}
{\includegraphics[height=0.35\textwidth]{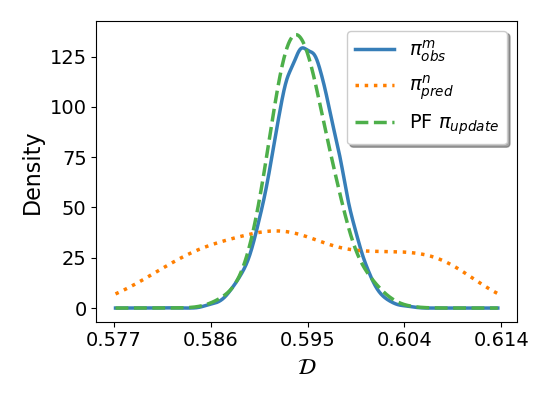}}
\caption{Data space comparisons of the optimal $L^2$ re-weighting scheme with the density-based approach for the illustrative example when $n=2E3$, $m=1E4$. On the left we plot the EDFs, on the right we plot the estimated densities.}
\label{fig:data_space_compare}
\end{figure}

% %%%%%%%%%%%%%%%%%%%%%%%%%%%%%%%%%%%%%%%%%%
% \subsection{Distributions-based approaches for solving inverse problems}
% \label{sec:cdfs}
% %%%%%%%%%%%%%%%%%%%%%%%%%%%%%%%%%%%%%%%%%%

%%%%%%%%%%%%%%%%%%%%%%%%%%%%%%%%%%%%%%%%%%
\subsection{Na\"ive approach for utilizing an EDF}
\label{sec:cdfs_naive}
%%%%%%%%%%%%%%%%%%%%%%%%%%%%%%%%%%%%%%%%%%
The first method we consider for solving the DCI problem is deemed the ``na\"ive'' approach.
For this approach, we use Routine~\ref{rout:QP} to compute a set of weights on the predicted samples (samples from the initial distribution that have been pushed forward through the map $Q$), and apply these weights directly on the corresponding initial samples.
This is summarized in Algorithm~\ref{alg:wEDFnaive}, which outputs $\initwedf$, defined in~\eqref{eq:wedfnaive}, as the solution.
This defines the following probability measure 
\begin{equation}
    \initwemeas(B) = \frac{1}{n}\sum_{i=1}^n w_i \mathbb{I}(\bm{\lambda}^i\in B), \quad B \in \mathcal{B}_\Lambda.
\end{equation}

\begin{algorithm}
\caption{Na\"ive approach}
\begin{algorithmic}[1]
\label{alg:wEDFnaive}
\STATE Set $n\in\mathbb{N}$, $\obscdf$, and $\initsamples$.
\hfill $\triangleright$ Input
\FOR {$1\leq i\leq n$}
    \STATE Evaluate $\bm{q}^i = Q(\bm{\lambda}^i)$
\ENDFOR
\STATE Use Routine \ref{rout:QP} to compute $\bm{w} = QP(\predsamples, \obscdf)$.
\STATE Apply $\bm{w}$ to initial samples $\initsamples$,
\begin{equation}
\label{eq:wedfnaive}
\initwedf(\bm{\lambda}) =\frac{1}{n}\sum^n_{i=1}w_i\mathbb{I}(\bm{\lambda} \preceq \bm{\lambda}^i).
\end{equation}
\STATE Return $\initwedf$.
\hfill $\triangleright$ Output
\end{algorithmic}
\end{algorithm}

\begin{figure}
\centering
{\includegraphics[height=0.35\textwidth]{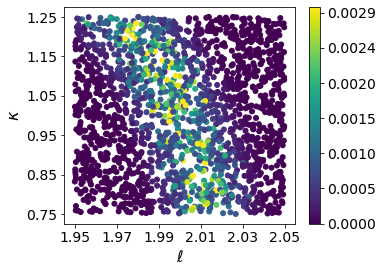}}
{\includegraphics[height=0.35\textwidth]{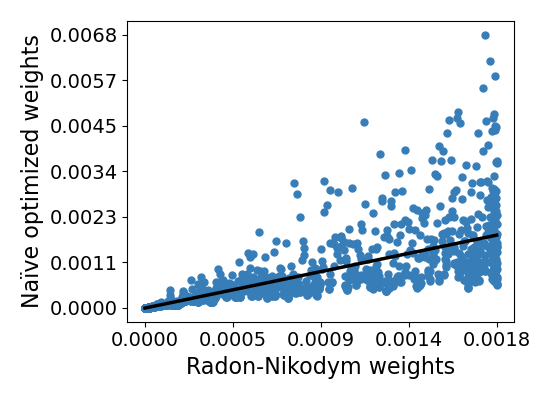}}
\caption{Weights resulting from na\"ive distributions-based method applied to initial samples on the left. Direct comparison to density-based weights on the right with the line indicating where perfect agreement occurs.}
\label{fig:naive_method}
\end{figure}

By comparing $r(\bm{\lambda})$ (shown in the left plot of Fig.~\ref{fig:dens_method}) to the weights, $\bm{w}$, computed from the na\"ive method (shown in the left plot of Fig.~\ref{fig:naive_method}), we observe that while the geometric structures of these weights are similar, the values are not the same as shown on the right side of Fig.~\ref{fig:naive_method}.
Not only does this na\"ive method place significantly more weight on certain samples, but the variation in the weights is also significantly greater than the variation in the density-based weights.
This is due to the simple fact that the QP is solved in the data space without any controls for variability in the parameter space.
Thus, while the solution is always optimal in the data space, it does not necessarily preserve any structure in the parameter space.
More precisely, this na\"ive approach does not require the solution to conform to the well-defined conditional structure dictated by the generalized contours of the map in the parameter space.
This observation motivates the binning approach in the next section.

%%%%%%%%%%%%%%%%%%%%%%%%%%%%%%%%%%%%%%%%%%
\subsection{Binning approach}
\label{sec:cdfs_partition}
%%%%%%%%%%%%%%%%%%%%%%%%%%%%%%%%%%%%%%%%%%
We now describe a two-step method to enforce the desired structure along the so-called ``contours'' defined by the pre-image map $Q^{-1}$ for which the conditionals defined by the disintegration of the initial probability measure are desired.
The key is to construct the ``binning distribution'' (defined below) associated with a particular partitioning of $\dspace$. Denote by $\{\mathcal{C}^k\}_{k=1}^p$ a partitioning of the data space.
For each cell $\mathcal{C}^k$, a representative point, denoted by $\bm{c}^k$, is identified.
These representative points could be the geometric center of the cell, the average of the predicted samples that fall into the cells, or any other suitable point in $\mathcal{C}^k$.
Alternatively, we may first generate the points and then consider the partition defined (implicitly) as Voronoi cells.
Each cell must be a \textit{continuity set} (i.e., a measurable set $A$ with $P(\partial A)=0$ where $\partial A$ denotes the topological boundary of $A$) of $\predmeas$, which in turn implies that it is a continuity set of $\obsmeas$, by Assumption \ref{ass:predictability}.

It is important to emphasize that the explicit construction of these cells is never actually required in either the theory or in practice, but we do reference the assumed continuity set property of these cells in the theoretical analysis of Section~\ref{sec:inv_theory}.
Further discussion on how to form an appropriate partition is found in Section \ref{sec:partitioning}.
For now, it suffices to understand this property as implying that the boundaries of such sets are both measurable and have zero-measure.

We solve a QP using the representative points as input samples in Algorithm~\ref{alg:wEDFnaive}, which effectively gives us an $L^2$-optimized weight for each (implicitly defined) \textit{cell}.
We define the resulting weighted EDF below. 
\begin{definition}[Binning Distribution]
The \textit{binning distribution} associated with $\{\bm{c}_k\}_{k=1}^p$ is defined as 
$F_{\dmeas;\bm{w}}^p = \frac{1}{p}\sum^p_{k=1}w_i\mathbb{I}(\bm{q} \preceq \bm{c}^k)$.
\end{definition}
Note here that because our sample set is now composed of $p$ samples, $\bm{w}$ is now a $p$-dimensional vector that is chosen separately from the $n$ parameter samples for which the QoI map is evaluated.
The subscript $\dmeas$ in $F_{\dmeas,\bm{w}}^p$ is utilized to emphasize that the partitioning utilized to construct the bins may be done a priori to any specification of a probability measure or generation of a random sample set on $\dspace$.
In other words, the measure of each bin may best be described, a priori, by utilizing $\dmeas$. 

To define a weighted EDF on $\pspace$, we utilize a classifier, denoted $Q^p$, to \textit{bin} the $n$ samples $\set{\bm{\lambda}^1,\ldots,\bm{\lambda}^n}$ into the $p$ (implicitly defined) sets $\set{Q^{-1}(\mathcal{C}^1),\dots,Q^{-1}(\mathcal{C}^p)}$.
It is at times conceptually convenient to also view the classifier $Q^p$ as defining a (discrete) map between the parameter and data space so that $Q^p(\bm{\lambda})=\bm{c}^k$ for a particular value of $1\leq k\leq p$. 
The meaning of the notation $Q^p$ as either a classifier or as a discrete map will always be clear from context.
This results in a vector of $n$ weights $\bm{u}=[u_1,\dots,u_n]^\top$, where $u_i$ corresponds to sample $\bm{\lambda}^i$, and is defined as $u_i = \frac{1}{n_k}w_k$ when $\bm{\lambda}^i \in Q^{-1}(\mathcal{C}^k)$ and $n_k$ is equal to the total number of parameter samples in $Q^{-1}(\mathcal{C}^k)$.
This method is described in detail below.

\begin{algorithm}
\caption{Binning approach}

\begin{algorithmic}[1]
\label{alg:wEDFbinning}
\STATE Set $p, n_\text{batch}\in\mathbb{N}$, $\initcdf$ and $\obscdf$
\hfill $\triangleright$ Input 
\STATE Partition $\mathcal{D} = \bigcup^p_{k=1}\mathcal{C}_k$
\STATE Identify representative point $c^k\in\mathcal{C}^k$ for each $k$
\STATE  Use Routine \ref{rout:QP} to compute $\bm{w} = QP(\{\bm{c}^k\}_{k=1}^p, \obscdf)$
\STATE Use $\bm{w}$ to set $\set{n_{k,\min}}_{k=1}^p$ with $n_{k,\min}\geq 0$ for all $k$
\STATE Set $n\gets 0$, $n_{k,\text{total}}\gets 0$ for $1\leq k\leq p$, initialize empty array of parameter samples $\mathcal{S}$, and define classifier $Q^p$ such that $Q^p(\lambda)= \bm{c}^k$ for some $1\leq k\leq p$ for all $\lambda\in\pspace$
\WHILE{$n_{k,\text{total}}<n_{k,\min}$ for any $k$}
     \STATE Generate $n_\text{batch}$ initial samples $\set{\bm{\lambda}^i}_{i=1}^{n_\text{batch}}$.
     \STATE Append $\set{\bm{\lambda}^i}_{i=1}^{n_\text{batch}}$ to $\mathcal{S}$, and set $n\leftarrow n+n_\text{batch}$ 
     \FOR {$1\leq i\leq n_\text{batch}$}
        \STATE Apply $Q^p$ to classify to which $\set{Q^{-1}(\mathcal{C}^k)}_{k=1}^p$ sample $\bm{\lambda}^i$ belongs
    \ENDFOR
     \FOR {$1\leq k\leq p$}
         \STATE $n_{k,\text{batch}}\gets $ number of $\set{\bm{\lambda}^i}_{i=1}^{n_\text{batch}}$ in $Q^{-1}(\mathcal{C}^k)$
         \STATE $n_{k,\text{total}}\gets n_{k,\text{total}} + n_{k,\text{batch}}$ 
     \ENDFOR
\ENDWHILE
\STATE Create array of $n$ scalar weights $\bm{u} = (u_1,\dots,u_n)$, as
    \begin{equation*}
    u_i = \begin{cases}
        \frac{w_k}{n_k}, & \text{if } n_{k,\min}>0, \\
        0, & \text{else}
    \end{cases}
    \end{equation*}
\STATE Construct weighted EDF,  $\initwedfpart$, on sample set $\mathcal{S}$ as
    \begin{equation}
    \label{eq:wedfbinning}
    \initwedfpart(\bm{\lambda}) = \sum^n_{i=1}u_i\mathbb{I}(\bm{\lambda} \preceq \bm{\lambda}^i).
    \end{equation}
\STATE Return $\initwedfpart$
\hfill $\triangleright$ Output
\end{algorithmic}
\end{algorithm}

Observe that the push-forward of $\initwedfpart$ through the discrete map $Q^p$ is, by construction, equal to $F_{\dmeas;\bm{w}}^p$ since we have
\begin{equation}
\label{eq:pf_def}
F_{\dmeas;\bm{w}}^p(\bm{q}) = \frac{1}{p}\sum^p_{k=1}w_k\mathbb{I}(\bm{q} \preceq \bm{c}^k) = \frac{1}{p}\sum^p_{k=1}\sum_{\{i : Q(\bm{\lambda^i}) \in \mathcal{C}^k\}}u_i\mathbb{I}(\bm{q} \preceq \bm{\lambda}^i) = \predwedfpart(\bm{q}).
\end{equation}
Thus, when applying this approach, we denote by $\predwedfpart$ the push-forward of $\initwedfpart$ via the map $Q^p$.
As $p$ and $n$ increase, $\predwedfpart$ and $\initwedfpart$ converge to the observed and update distribution, respectively, as shown in Section \ref{sec:theory}.

We illustrate the weights obtained from the binning method, where the partition is generated using a regular grid with $35$ bins, in the left plot of Fig.~\ref{fig:regpart_method}.
Comparing the right plots of Figs.~\ref{fig:naive_method} and~\ref{fig:regpart_method}, it is clear the resulting weights from the binning method are closer to the density-based weights, than the weights generated by the na\"ive method.
Note that the binning method assigns samples in the same bin the same weights, which explains the step-wise nature of the right plot in Fig.~\ref{fig:regpart_method}.

\begin{figure}
\centering
{\includegraphics[height=0.35\textwidth]{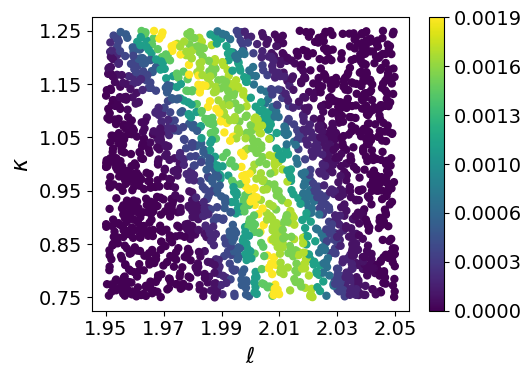}}
{\includegraphics[height=0.35\textwidth]{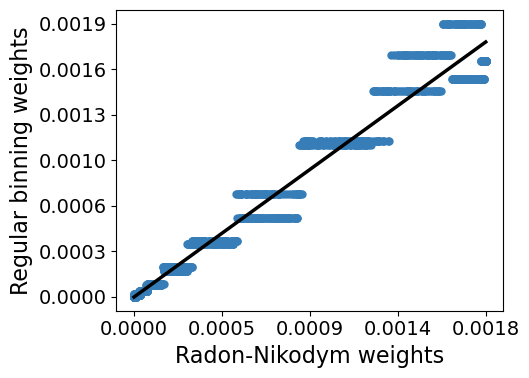}}
\caption{Weights resulting from the two-step partitioning method applied to initial samples on the left. Direct comparison to density-based weights on the right with the line indicating where perfect agreement occurs.}
\label{fig:regpart_method}
\end{figure}

\subsection{Construction of bins}
\label{sec:partitioning}
The binning method we have developed is similar to the two-step non-parametric importance sampling methods in \cite{Zhang1996} and \cite{Neddermeyer2009}.
There, a proposal distribution is approximated using a non-parametric density method, then classical importance sampling is performed.
Here, we are using a histogram-like non-parametric density estimation to approximate the predicted density on the data space before performing an optimization-based change of measure.

If we construct the cells using a regular grid partitioning scheme and representative points are chosen as the geometric centers of the cells, the first step is exactly a histogram density approximation.
When the support of the predicted density is irregular or the data space is very high dimensional, it may be more effective to use a data-driven partitioning scheme.
For example, we can cluster the predicted samples using K-means clustering \cite{arthur2006k}, and use variance-minimizing representative points.
For the convergence results of this paper to hold, we require a single additional assumption on the clustering method.
\begin{assumption}
\label{ass:clustering}
The binning distribution $F_{\mu_\mathcal{D}; \bm{w}}^{n,p}$ converges as $n,p \to \infty$ to a distribution that is absolutely continuous w.r.t. the observed distribution.
\end{assumption}

For the illustrative example, Fig.~\ref{fig:events} shows the binning of the samples in the parameter space that result from using both a regular grid partition and a K-means clustering method to create the cells on the data space.
The bins resulting from the K-means binning method still follow the contours of the mapping, as do the bins from the regular grid binning method, but they are more size-variable along the data-informed direction.

\begin{figure}
\centering
{\includegraphics[height=0.35\textwidth]{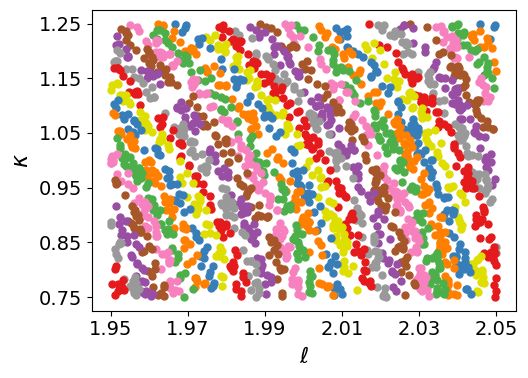}}
{\includegraphics[height=0.35\textwidth]{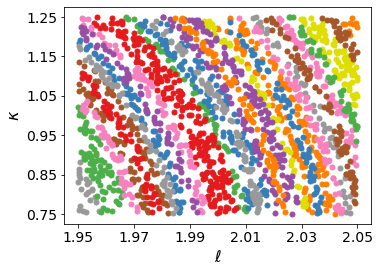}}
\caption{Binning of initial samples resulting from regular grid and K-means clustering partitioning methods using $40$ bins. Although the bins are computed on the output space, here we are showing the samples binned in the input space to illustrate how samples in the same bin fall in the same contour event.}
\label{fig:events}
\end{figure}

We plot the resulting weights obtained from the K-means clustering on the initial samples and compare to the density-based weights in the plots of Fig.~\ref{fig:kpart_method}.

\begin{figure}
\centering
{\includegraphics[height=0.35\textwidth]{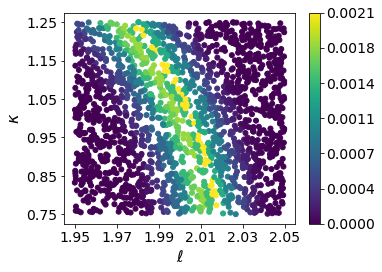}}
{\includegraphics[height=0.35\textwidth]{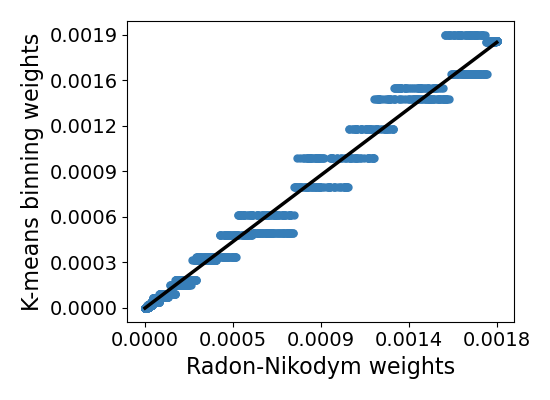}}
\caption{Weights resulting from the two-step partitioning method, where the partition is generated using a $k$-means classifier, applied to initial samples on the left. Direct comparison to density-based weights on the right.}
\label{fig:kpart_method}
\end{figure}

Visualizing the push-forward of the solution resulting from each method, as in Fig.~\ref{fig:part_res_compare}, we see that the binning method is not necessarily the $L^2$-optimal step function on the predicted samples.
For this figure, in order to more clearly see differences in the results for each of the methods, we re-ran the heat equation experiment with $m=10E4$ and $n=2E2$, and $p$, the number of bin, equal to $10$.
It may in fact result in a greater $L^2$-distance between the resulting weighted EDF in the data space and the observed than produced by the na\"ive method.
However, the result is still optimal for the weighted EDF over the partition centers, and as long as the partition is sufficient, it results in a relatively small loss in accuracy for the push-forward and remains data-consistent.
The solution itself is stable and accurate compared to the na\"ive method.

\begin{figure}
\centering
{\includegraphics[height=0.4\textwidth]{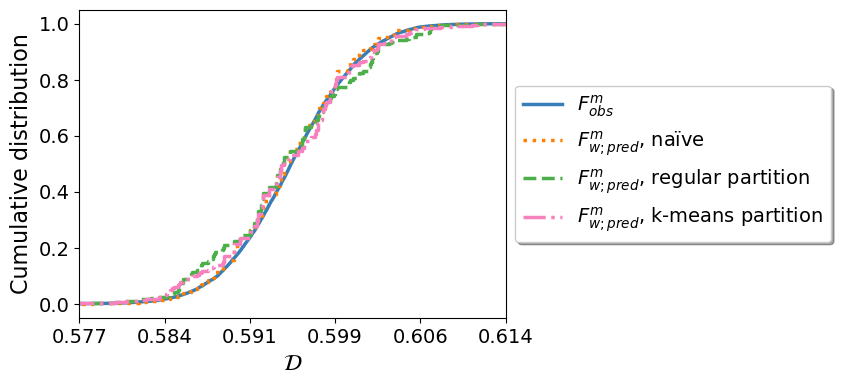}}
\caption{Push-forward distributions resulting from the various distribution-based methods. In both partition cases, we used $10$ bins. In all cases, $m=10E4$, $n=2E2$.}
\label{fig:part_res_compare}
\end{figure}
 
Next, we consider the case where at least one of the distributions involved does not admit a density.
In this case, the density based solution does not technically exist and application of the density method will result in clear inaccuracies.
For the heat equation example, we consider an observed distribution defined as a mixture of uniform distributions, $\obsmeas\sim 0.5\mathcal{U}((0.585,0.59]) + 0.1\mathcal{U}((0.59,0.595]) + 0.4\mathcal{U}((0.595,0.6])$,
resulting in a piecewise-linear distribution function.
We use $m=10E4$ observed samples and $n=2E4$ input/predicted samples.
The distributions involved and the result of the DCI density-based methods and binning-based methods are shown in Fig.~\ref{fig:no_dens}.
The right plot illustrates the errors the density-based method produces around the endpoints of the sub-intervals associated with each uniform distribution (where a ``density'' exhibits discontinuities) whereas the binning-based method produces a solution that is indistinguishable to the observed distribution.
\begin{figure}
\centering
{\includegraphics[height=0.33\textwidth]{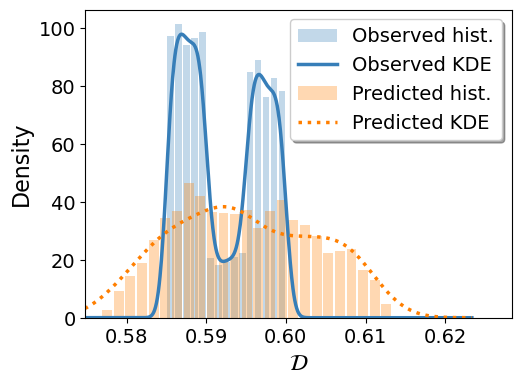}}
{\includegraphics[height=0.33\textwidth]{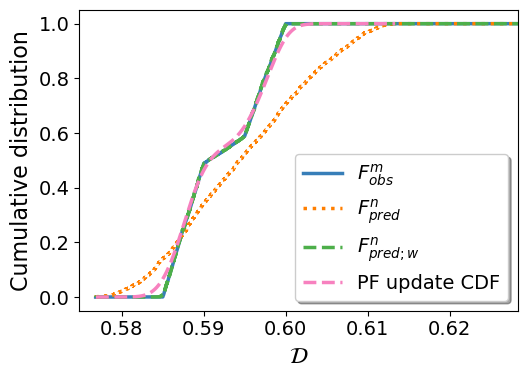}}
\caption{On the left: histograms of observed and predicted samples, and KDE estimations of the observed and predicted densities. On the right: DCI results for the binning and density-based methods. The CDF of the push-forward of the update computed from the density method (``PF update CDF'' in the plot) is clearly less accurate than the push-forward of the binning-based solution.}
\label{fig:no_dens}
\end{figure}

\section{Theoretical results}\label{sec:theory}

The goal of this section is to summarize existing results and develop the necessary theory required to prove that the weighted EDF obtained via Algorithm~\ref{alg:wEDFnaive}
converges to the data-consistent solution.
Section~\ref{sec:conv_dist} provides the theoretical results for multivariate distribution functions.
Specifically, we provide a theoretical connection between the $L^p$-convergence of multivariate distribution functions to the more commonly studied weak convergence.
For the \emph{univariate} case, $L^1$-convergence of \emph{univariate} distribution functions is equivalent to convergence in the Kantorovich/Wasserstein distance metric and implies weak convergence \cite{GibbsSu02}.
The weak convergence of multivariate distribution functions subsequently implies convergence on the so-called $P$-continuity sets, where $P$ is the probability measure associated with the limit of the distribution functions.
Recalling that the binning sets utilized in Algorithm~\ref{alg:wEDFnaive} are $\predmeas$-continuity sets, we then prove that the weighted EDFs converge to the data-consistent solution in Section \ref{sec:inv_theory}.

\subsection{Convergence of Multivariate Distribution Functions}
\label{sec:conv_dist}

We start with a formal definition of weak convergence for probability measures defined on a measurable space constructed from a metric space $S$ along with its Borel $\sigma$-algebra $\Sigma$.
\begin{definition}
A sequence of probability measures $\left\{P_n\right\}_{n\in\mathbb{N}}$ on $(S,\Sigma)$ converges weakly to probability measure $P$ if, for any $P$-continuity set $A$, $\lim_{n \to \infty} P_n(A) \to P(A)$.
\end{definition}

The following lemma relates this definition of weak convergence to the pointwise convergence of the corresponding multivariate distribution functions when $S=\mathbb{R}^d$.
\begin{lemma}
\label{lem:conv_dist}
Let $F$ be a distribution function on $\mathbb{R}^d$ corresponding to a probability measure $P$, and $\seq{F_n}$ a sequence of distribution functions corresponding to probability measures $\set{P_n}$.
If $F_n(\bm{x}) \to F(\bm{x})$ at all continuity points $\bm{x}\in\mathbb{R}^d$ of $F$, then $P_n\to P$ weakly.
\end{lemma}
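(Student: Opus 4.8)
The plan is to derive weak convergence in the stated ($P$-continuity set) form from the pointwise convergence of the distribution functions by passing through a convergence-determining family of half-open rectangles and then invoking the one-sided Portmanteau inequalities. The key elementary observation is that in each coordinate direction $k$ only countably many levels $c$ can satisfy $P(\{\bm{x} : x_k = c\}) > 0$, since such hyperplanes are disjoint and would each carry positive mass in a probability space. I call the remaining co-countable set of levels the \emph{continuity values} in direction $k$. A half-open box $R = (\bm{a},\bm{b}] = \prod_{k=1}^d (a_k,b_k]$ all of whose endpoints are continuity values then satisfies $P(\partial R) = 0$, so it is a $P$-continuity set, and moreover each of its $2^d$ corners is a continuity point of $F$ (the potential jump of $F$ at a corner is controlled by the $P$-mass on the coordinate hyperplanes through it, which vanishes).

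First I would establish $P_n(R) \to P(R)$ for every such ``good'' box $R$. This uses the inclusion--exclusion identity expressing the measure of a box as an alternating sum of the distribution function over its corners,
\begin{equation*}
P(R) = \sum_{\bm{v}} (-1)^{\#\{k \,:\, v_k = a_k\}} F(\bm{v}),
\end{equation*}
the sum ranging over the $2^d$ corners $\bm{v}$ of $R$, together with the identical identity relating $P_n$ and $F_n$. Since every corner $\bm{v}$ is a continuity point of $F$, the hypothesis gives $F_n(\bm{v}) \to F(\bm{v})$, and summing the finitely many terms yields $P_n(R) \to P(R)$.

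Next I would upgrade this to the Portmanteau inequality for open sets. Given an open $U$ and $\varepsilon > 0$, I would approximate $U$ from the inside by finitely many pairwise-disjoint good boxes $R_1,\dots,R_m \subseteq U$ with $\sum_j P(R_j) \ge P(U) - \varepsilon$; this is possible because $U$ is a countable union of small half-open boxes on a dyadic grid, and the grid can be translated so that all of its coordinate levels are continuity values (all but countably many translates work). Then
\begin{equation*}
\liminf_{n\to\infty} P_n(U) \ge \liminf_{n\to\infty} \sum_{j=1}^m P_n(R_j) = \sum_{j=1}^m P(R_j) \ge P(U) - \varepsilon,
\end{equation*}
and letting $\varepsilon \to 0$ gives $\liminf_n P_n(U) \ge P(U)$. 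Passing to complements yields $\limsup_n P_n(C) \le P(C)$ for closed $C$. Finally, for any $P$-continuity set $A$ one has $P(A^\circ) = P(A) = P(\overline{A})$, and sandwiching $P_n(A)$ between $P_n(A^\circ)$ and $P_n(\overline{A})$ forces $P_n(A) \to P(A)$, which is precisely weak convergence in the sense defined above.

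The main obstacle is the bookkeeping in the interior approximation of an arbitrary open set by good boxes: one must simultaneously keep the boxes $P$-continuity sets (continuity-value endpoints), disjoint, contained in $U$, and rich enough to capture all but $\varepsilon$ of $P(U)$. The secondary technical point is the correspondence between continuity of $F$ at a corner and the vanishing of $P$ on the hyperplanes through that corner, which is exactly what lets the hypothesis on continuity points of $F$ feed into the rectangle identity. Both points are standard but warrant care; the underlying fact that such rectangles form a convergence-determining class is classical (cf.\ the distribution-function characterization of weak convergence in \cite{Billingsley2012}).
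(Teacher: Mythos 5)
Your proposal is correct, and it reconstructs in full the classical argument that the paper does not spell out but simply defers to Example~2.3 of Billingsley's \emph{Convergence of Probability Measures}: convergence on half-open rectangles whose corners are continuity points of $F$ (via the inclusion--exclusion identity), inner approximation of open sets by such rectangles with continuity-value endpoints, and the Portmanteau sandwich for $P$-continuity sets. Since this is essentially the same route as the cited source, there is nothing further to reconcile.
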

\begin{proof}
The proof of this is provided within Example 2.3 of \cite{Billingsley1999}.
\end{proof}

The necessity of convergence at {\em every} continuity point of $F$ in the above lemma hints at a technical hurdle we must overcome.
Specifically, convergence of distribution functions in $L^p$ is not pointwise convergence. 
However, convergence in $L^p$ implies the existence of a subsequence $F_{n_k}\to F$ almost everywhere (a.e.).
It is not immediately obvious that all the continuity points of $F$ should belong to the a.e.~set nor is it immediately clear how to relate a subsequential limit to a limit of the original sequence of distribution functions.
We address these issues below.

The next lemma states that a multivariate CDF that converges a.e.~must also converge at all continuity points of the limit distribution.

\begin{lemma}\label{lem:cont_ptz}
Let $F$ be a multivariate distribution function on $\mathbb{R}^d$ and $\{F_n\}_{n\in\mathbb{N}}$ a sequence of approximate distribution functions. Suppose that $F_n \to F$ a.e., then $F_n \to F$ at all continuity points of $F$.
\end{lemma}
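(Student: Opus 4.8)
The plan is to prove the statement directly via a monotonicity ``squeeze'' at each continuity point, exploiting that a.e.\ convergence guarantees convergence on a set whose complement is Lebesgue-null and hence meets every coordinate box around the point of interest. Fix a continuity point $\bm{x}$ of $F$ and let $N\subset\mathbb{R}^d$ be the Lebesgue-null set off which $F_n\to F$ pointwise. The essential structural fact I would use is that every distribution function is monotone in each coordinate, so for any $\bm{a}\preceq\bm{x}\preceq\bm{b}$ the value $F_n(\bm{x})$ is trapped between $F_n(\bm{a})$ and $F_n(\bm{b})$; this lets me control $F_n(\bm{x})$ through values at nearby points that \emph{do} lie in the convergence set, even though $\bm{x}$ itself may not.

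To produce such points, I would observe that for each $k\in\mathbb{N}$ the closed box $\prod_{i=1}^d[x_i-1/k,\,x_i]$ has positive Lebesgue measure and therefore cannot be contained in $N$; I pick $\bm{a}^{(k)}$ in this box with $\bm{a}^{(k)}\notin N$, so that $\bm{a}^{(k)}\preceq\bm{x}$, $\bm{a}^{(k)}\to\bm{x}$, and $F_n(\bm{a}^{(k)})\to F(\bm{a}^{(k)})$ as $n\to\infty$. Symmetrically, choosing points in $\prod_{i=1}^d[x_i,\,x_i+1/k]$ yields $\bm{b}^{(k)}\succeq\bm{x}$ with $\bm{b}^{(k)}\notin N$ and $\bm{b}^{(k)}\to\bm{x}$. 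Coordinatewise monotonicity of the $F_n$ then gives the sandwich, and since each $F_n$ takes values in $[0,1]$ the quantities $\liminf_n F_n(\bm{x})$ and $\limsup_n F_n(\bm{x})$ are finite; passing to these limits in $n$ with $k$ fixed bounds both of them between $F(\bm{a}^{(k)})$ and $F(\bm{b}^{(k)})$.

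Finally I would let $k\to\infty$: since $\bm{a}^{(k)},\bm{b}^{(k)}\to\bm{x}$ and $\bm{x}$ is a continuity point of $F$, both $F(\bm{a}^{(k)})$ and $F(\bm{b}^{(k)})$ converge to $F(\bm{x})$, collapsing the sandwich and forcing $\liminf_n F_n(\bm{x})=\limsup_n F_n(\bm{x})=F(\bm{x})$, i.e.\ $F_n(\bm{x})\to F(\bm{x})$. The main obstacle---and essentially the only place measure theory enters---is the justification that the squeeze points can always be found in the full-measure convergence set arbitrarily close to $\bm{x}$ from both ``below'' and ``above''; this is exactly where I use that $N$ is Lebesgue-null, so it cannot contain a box of positive measure, together with the coordinatewise monotonicity that makes two-sided control at nearby points suffice. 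I would also take care that the direction of the monotonicity matches the distribution-function convention in use (recalling that the EDFs here are built from $\mathbb{I}(\bm{q}\preceq\bm{q}^i)$), swapping the roles of the upper and lower sandwich bounds accordingly, but the argument is otherwise insensitive to orientation.
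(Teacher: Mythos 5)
Your proposal is correct and follows essentially the same route as the paper's proof: both select approximating points from the full-measure convergence set inside small coordinate boxes below and above $\bm{x}$ (using that a null set cannot contain a box of positive Lebesgue measure), apply coordinatewise monotonicity of the $F_n$ to sandwich $F_n(\bm{x})$, pass to $\liminf$/$\limsup$ in $n$, and then invoke continuity of $F$ at $\bm{x}$ to collapse the bounds. No substantive differences to report.
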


\begin{proof}
    See Appendix \ref{app:cont_ptz}.
\end{proof}

We now state the main result of this subsection that connects $L^p$ convergence of multivariate distribution functions to weak convergence of probability measures..

\begin{theorem}\label{thm:am_ext}
Let $F$ be a multivariate distribution function on $\mathbb{R}^d$ corresponding to a probability measure $P$, and $\{F_n\}_{n\in\mathbb{N}}$ a sequence of distribution functions corresponding to probability measures $\{P_n\}_{n\in\mathbb{N}}$. 
If $F_n \to F$ in $L^p$, where $p \ge 1$, then $P_n \to P$ weakly.
\end{theorem}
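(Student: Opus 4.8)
The plan is to chain the two preceding lemmas and then upgrade a subsequential conclusion to a full-sequence statement using the subsequence principle. \Cref{lem:cont_ptz} converts a.e.\ convergence of the distribution functions into convergence at every continuity point of $F$, and \Cref{lem:conv_dist} then converts that into weak convergence of the associated measures; the only gap is that $L^p$ convergence supplies a.e.\ convergence of the distribution functions merely along a subsequence, so at the end I must recover weak convergence of the whole sequence $\{P_n\}_{n\in\mathbb{N}}$.

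First I would recall the classical Riesz--Fischer-type fact that convergence in $L^p$ (for any $p\ge 1$, with respect to the dominating measure on $\mathbb{R}^d$) implies convergence in measure, and hence that any $L^p$-convergent sequence admits a subsequence converging a.e.\ to the same limit. Applying this to $F_n\to F$ produces a subsequence with $F_{n_k}\to F$ a.e.; \Cref{lem:cont_ptz} then gives $F_{n_k}\to F$ at every continuity point of $F$, and \Cref{lem:conv_dist} yields $P_{n_k}\to P$ weakly.

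The hard part is promoting this subsequential weak convergence to weak convergence of the entire sequence, and here I would use that weak convergence of probability measures on the separable metric space $\mathbb{R}^d$ is metrizable (say, by the L\'evy--Prokhorov metric), so the subsequence principle applies: it suffices to show that every subsequence of $\{P_n\}_{n\in\mathbb{N}}$ admits a further subsequence converging weakly to $P$. Given an arbitrary subsequence $\{P_{n_j}\}$, the corresponding distribution functions $\{F_{n_j}\}$ still converge to $F$ in $L^p$, being a subsequence of an $L^p$-convergent sequence; repeating the extraction of the previous paragraph produces a further subsequence $\{P_{n_{j_\ell}}\}$ with $P_{n_{j_\ell}}\to P$ weakly. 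The subsequence principle then delivers $P_n\to P$ weakly.

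I expect the metrizability-plus-subsequence-principle step to be the crux, since it is exactly what resolves the mismatch between $L^p$ convergence (which controls only a subsequence a.e.) and the every-continuity-point hypothesis required by \Cref{lem:conv_dist}. A purely elementary alternative that avoids invoking metrizability proceeds by contradiction: if $P_n\not\to P$ weakly, then some $P$-continuity set $A$ and some $\varepsilon>0$ satisfy $|P_{n_j}(A)-P(A)|\ge\varepsilon$ along a subsequence; extracting a further a.e.-convergent subsequence and applying the two lemmas forces $P_{n_{j_\ell}}(A)\to P(A)$, contradicting the persistent gap. Either route reduces the theorem to the already-established lemmas together with the standard measure-theoretic extraction of an a.e.-convergent subsequence.
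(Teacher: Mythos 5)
Your proof is correct, and its skeleton---extract from an arbitrary subsequence a further a.e.-convergent subsubsequence, then feed it through \Cref{lem:cont_ptz} and \Cref{lem:conv_dist}---is the same as the paper's. The one place you diverge is the final promotion from subsequential to full-sequence convergence. You apply \Cref{lem:conv_dist} \emph{inside} the extraction to get weak convergence of each subsubsequence of measures, and then invoke metrizability of weak convergence on $\mathbb{R}^d$ (L\'evy--Prokhorov) so that the subsequence principle can be applied at the level of measures. The paper instead applies the subsequence principle (its \Cref{lem:subsub}) to the \emph{real-valued} sequences $F_n(\bm{x})$ for each fixed continuity point $\bm{x}$, concluding $F_n(\bm{x})\to F(\bm{x})$ at every continuity point of $F$, and only then applies \Cref{lem:conv_dist} once to the whole sequence. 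The paper's ordering is slightly lighter, since it never needs to know that weak convergence is metrizable; your elementary alternative (running the contradiction on $|P_{n_j}(A)-P(A)|\ge\varepsilon$ for a fixed $P$-continuity set $A$) closes that gap and is essentially the paper's argument transplanted from $F_n(\bm{x})$ to $P_n(A)$. Either route is valid; what metrizability buys you is a one-line appeal to a standard topological fact, at the cost of importing machinery the rest of the argument does not need.
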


\begin{proof}
    See Appendix \ref{app:am_ext}.
\end{proof}

Theorem~\ref{thm:am_ext} immediately extends a result given in~\cite{Amaral2017} involving the convergence of the $L^2$-based optimization method for re-weighting the empirical distribution function in the data space that we exploit within Algorithm~\ref{alg:wEDFbinning}.
Specifically, between Theorem 1 and Corollary 1 in~\cite{Amaral2017}, the authors demonstrate that the optimal $L^2$-convergence of distribution functions implies $L^1$-convergence.
It is then commented that in the univariate case (i.e., for distribution functions defined on $\mathbb{R}$) this is equivalent to convergence in the Kantorovich/Wasserstein distance from which weak convergence follows under the additional assumption of bounded support of the proposal measure. 
By referring to Theorem~\ref{thm:am_ext} instead of the Kantorovich/Wasserstein distance, it immediately follows that the optimization approach developed in~\cite{Amaral2017}, and utilized in the QP portion of Algorithm~\ref{alg:wEDFbinning}, produces a sequence of distribution functions and corresponding probability measures that weakly converge in $\mathbb{R}^d$ for any $d\geq 1$.
We summarize this observation as a corollary.
\begin{corollary}
Let $F$ be a distribution function on $\mathbb{R}^d$ corresponding to a probability measure $P$, and $\{F^n_{\bm{w}}\}$ a sequence of distribution functions resulting from applying Algorithm \ref{alg:wEDFnaive} to a set of $n$ samples $\{\bm{x}^1,\dots,\bm{x}^n\}$ from a distribution that is absolutely continuous with respect to $P$.
Then, the probability distributions corresponding to the distribution functions $\seq{F_{{\bm{w}}}^n}$ converge weakly to $F$ as $n\to\infty$.
\end{corollary}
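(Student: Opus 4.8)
The plan is to obtain the corollary as a direct consequence of two ingredients: the $L^2$-convergence of the optimally re-weighted empirical distribution function established in~\cite{Amaral2017}, and the newly proved Theorem~\ref{thm:am_ext}, which converts $L^p$-convergence of multivariate distribution functions into weak convergence of the associated probability measures.

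First I would invoke the convergence analysis of~\cite{Amaral2017}. Applied to the samples $\{\bm{x}^1,\dots,\bm{x}^n\}$ with target distribution function $F$, Theorem 1 together with Corollary 1 of~\cite{Amaral2017} guarantee that the weights $\bm{w}$ returned by Routine~\ref{rout:QP} within Algorithm~\ref{alg:wEDFnaive} produce a weighted empirical distribution function $F^n_{\bm{w}}$ satisfying $F^n_{\bm{w}} \to F$ in $L^2$, and hence in $L^1$, as $n \to \infty$. The point to stress is that this $L^2$-argument is dimension-agnostic: it uses only the structure of the quadratic objective, the assumed absolute continuity of the sampling distribution relative to $P$, and the reduction (without loss of generality) of the sample support to the unit hypercube described in Section~\ref{sec:cdfs_forward}. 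None of these require $d = 1$.

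With $F^n_{\bm{w}} \to F$ in $L^p$ for some $p \ge 1$ in hand, the conclusion is immediate from Theorem~\ref{thm:am_ext}: the probability measures corresponding to $\seq{F^n_{\bm{w}}}$ converge weakly to $P$. This final step is exactly where the multivariate machinery is needed. In~\cite{Amaral2017}, weak convergence could only be reached for $d = 1$ by identifying $L^1$-convergence with convergence in the Kantorovich/Wasserstein metric; Theorem~\ref{thm:am_ext} replaces that identification and thereby removes the restriction to the univariate case.

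The main obstacle I anticipate is not conceptual but a matter of careful bookkeeping around hypotheses, since the logical chain itself is short. In particular, I would need to confirm that the absolute-continuity condition assumed here (the sampling distribution relative to $P$) is precisely the one under which~\cite{Amaral2017} establishes the vanishing of the optimal $L^2$-objective, and that the compact-support rescaling into the hypercube leaves the optimal weights, and therefore the limit, unchanged. Once these points are verified, no additional estimates are required and the corollary follows.
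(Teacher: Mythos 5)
Your argument is exactly the paper's: the corollary is stated as an immediate consequence of the $L^2$- (hence $L^1$-) convergence of the optimally weighted EDF from Theorem 1 and Corollary 1 of \cite{Amaral2017}, combined with Theorem~\ref{thm:am_ext} to upgrade $L^p$-convergence of the distribution functions to weak convergence of the measures in arbitrary dimension $d$. The paper gives no separate proof beyond this observation, so your proposal is correct and takes essentially the same route.
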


\subsection{Convergence of EDF Approximations of Solutions to the Inverse Problem}
\label{sec:inv_theory}

Returning to the inverse problem, the goal is to use Theorem~\ref{thm:am_ext} to show that under certain conditions the binning-based solution obtained from Algorithm~\ref{alg:wEDFbinning} produces EDFs such that the associated probabilities of certain events on $\pspace$ converge to the probabilities given by the data-consistent solution $\upmeas$.

Recall from~\eqref{eq:pf_def} that the push-forward of $\initwedfpart$ through the map $Q^p$ is, by construction, equal to $F_{\dmeas;\bm{w}}^p = \frac{1}{p}\sum^p_{k=1}w_i\mathbb{I}(\bm{q} \preceq \bm{c}^k)$.
Denoting by $\predwedfpart$ the push-forward of $\initwedfpart$ via the map $Q^p$ proves the following Lemma.

\begin{lemma}\label{lem:optweights_converge}
Let $\set{\mathcal{C}_k}_{k=1}^{p}$ be a $\obsmeas$-continuity partition of $\dspace$, with associated weights $\bm{w} \in \mathbb{R}^p$ generated by Algorithm \ref{alg:wEDFbinning}, and let $\set{\bm{\lambda}^j}_{j=1}^n$ be the corresponding sequence of random samples in $\Lambda$ with computed weights $\bm{u}\in\mathbb{R}^n$ from Algorithm~\ref{alg:wEDFbinning}.
Together, these weights and samples define a predicted distribution $\predwemeaspart$ such that for every $\obsmeas$-continuity set $D$, 
\begin{equation*}
    \lim_{p\to\infty}\lim_{n\to\infty}\predwemeaspart(D) = \obsmeas(D).
\end{equation*}
In other words, $\predwemeaspart \to \obsmeas$ weakly as $p, n\to\infty$.
\end{lemma}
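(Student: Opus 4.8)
The plan is to prove the iterated limit by decoupling the two indices: I would first dispatch the inner limit in $n$ (for a fixed partition), then carry out the outer limit in $p$, and finally appeal to the fact that convergence on every $\obsmeas$-continuity set is, by definition, weak convergence.

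\textbf{Inner limit.} First I would observe that the dependence of $\predwemeaspart$ on $n$ is essentially cosmetic for the push-forward. The discrete map $Q^p$ collapses every parameter sample lying in $Q^{-1}(\mathcal{C}^k)$ onto the single representative point $\bm{c}^k$, and the weights $\bm{u}$ in Algorithm~\ref{alg:wEDFbinning} are defined precisely so that the aggregate mass deposited at $\bm{c}^k$ equals $w_k$. This is exactly the content of~\eqref{eq:pf_def}, which identifies $\predwedfpart$ with the binning distribution $F_{\dmeas;\bm{w}}^p$. The only role of $n\to\infty$ is to guarantee, via the adaptive \textbf{while}-loop, that every cell carrying positive weight ($n_{k,\min}>0$) is populated by at least one sample so that no mass is lost. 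Consequently, for each fixed $p$ and every measurable $D\subset\dspace$, I would conclude $\lim_{n\to\infty}\predwemeaspart(D) = P_{\dmeas;\bm{w}}^p(D)$, where $P_{\dmeas;\bm{w}}^p$ denotes the probability measure associated with $F_{\dmeas;\bm{w}}^p$.

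\textbf{Outer limit.} The core of the argument is to show $P_{\dmeas;\bm{w}}^p \to \obsmeas$ weakly as $p\to\infty$. Here I would recognize $F_{\dmeas;\bm{w}}^p$ as the output of Routine~\ref{rout:QP} applied to the $p$ representative points $\{\bm{c}^k\}_{k=1}^p$ against the target $\obscdf$, so that $\bm{w}$ is $L^2$-optimal. The goal is then to invoke the corollary following Theorem~\ref{thm:am_ext}, which guarantees that a QP-reweighted sequence of distribution functions converges weakly to the target provided the underlying points are distributed according to a measure absolutely continuous with respect to $\obsmeas$. Assumption~\ref{ass:clustering} supplies the missing hypothesis by forcing the refined binning distribution to converge, as $p\to\infty$, to a limit absolutely continuous with respect to $\obsmeas$, while the requirement that each $\mathcal{C}^k$ be a $\obsmeas$-continuity set (inherited from $\predmeas$-continuity via Assumption~\ref{ass:predictability}) ensures the discretization creates no spurious atoms on cell boundaries as the partition is refined. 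Combining the $L^2$-optimality of $\bm{w}$ with this absolute-continuity limit pins the weak limit to $\obsmeas$, yielding $\lim_{p\to\infty} P_{\dmeas;\bm{w}}^p(D)=\obsmeas(D)$ for every $\obsmeas$-continuity set $D$.

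\textbf{Conclusion and main obstacle.} Chaining the two limits gives $\lim_{p\to\infty}\lim_{n\to\infty}\predwemeaspart(D) = \obsmeas(D)$ on every $\obsmeas$-continuity set $D$, which is precisely the definition of weak convergence and completes the proof. The step I expect to be the main obstacle is the outer limit: the corollary following Theorem~\ref{thm:am_ext} is phrased for \emph{iid} samples drawn from a fixed proposal, whereas the representative points $\{\bm{c}^k\}$ are placed deterministically (grid centers) or adaptively (K-means centroids) and their empirical distribution shifts with $p$. Bridging this gap—arguing that the sequence of partition-induced proposals behaves in the limit like sampling from a single distribution absolutely continuous with respect to $\obsmeas$—is exactly what Assumption~\ref{ass:clustering} is introduced to secure, and making that reduction rigorous is the delicate part of the argument.
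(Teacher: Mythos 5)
Your proposal is correct and follows essentially the same route as the paper, whose entire ``proof'' of Lemma~\ref{lem:optweights_converge} is the single sentence preceding its statement: the push-forward of $\initwedfpart$ under $Q^p$ equals the binning distribution $F_{\dmeas;\bm{w}}^p$ by~\eqref{eq:pf_def}, with the convergence then resting implicitly on the QP convergence results of Section~\ref{sec:cdfs_forward}, Theorem~\ref{thm:am_ext}, and Assumption~\ref{ass:clustering}, exactly as you spell out. The obstacle you flag in your final paragraph --- that the corollary to Theorem~\ref{thm:am_ext} is stated for iid samples from a fixed proposal while the representative points $\{\bm{c}^k\}$ are deterministic or adaptively placed and shift with $p$ --- is genuine, and the paper does not address it beyond invoking Assumption~\ref{ass:clustering}; your more explicit accounting of where that assumption enters is, if anything, more careful than the original.
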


We now formally state the main result of this paper.

\begin{theorem}\label{thm:dci_binning}
Assume the predictability assumption holds and let $\upcdf$ and $\upmeas$ denote the distribution and probability measure, respectively, defined by the density-based approach. 
Under the conditions of Lemma~\ref{lem:optweights_converge}, the sequence of distribution functions $\set{\initwedfpart}_{n,p=1}^\infty$ and associated probability measures $\set{\initwemeaspart}_{n,p=1}^\infty$ have the following properties.
\begin{enumerate}[(i)]
    \item Defining the push-forward $F_{pred;\bm{u}}^{n,p}$ as in Lemma~\ref{lem:optweights_converge}, $\predwedfpart\to\obscdf$ weakly as $n,p\to\infty$.
    \item Let $A\in\pborel$ with $Q(A)\in\dborel$ a continuity set of $\obsmeas$ and $\obsmeas(Q(A))>0$. If $\set{\bm{\lambda}^j}_{j=1}^n$ is an iid set drawn from $\initmeas$, then $\initwemeaspart(A)\to \upmeas(A)$ as $n,p\to \infty$.
\end{enumerate}
\end{theorem}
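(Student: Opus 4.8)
Part~(i) is essentially a restatement of Lemma~\ref{lem:optweights_converge}: that lemma gives $\predwemeaspart\to\obsmeas$ weakly as $p,n\to\infty$, and since weak convergence of probability measures on $\R^d$ is equivalent to convergence of the associated distribution functions at every continuity point of the limit (the converse direction of Lemma~\ref{lem:conv_dist}, i.e.\ the portmanteau theorem), the push-forward distribution functions $\predwedfpart$ converge weakly to $\obscdf$. So I regard (i) as immediate and would devote the work to (ii).

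For (ii) the plan is to exploit the disintegration of $\initmeas$ together with the fact that the update re-weights only \emph{across} contours. By~\eqref{eq:density} the update density is $\initdens(\bm{\lambda})r(\bm{\lambda})$ with $r$ constant on each contour $Q^{-1}(\bm{q})$, so $\upmeas$ shares the conditional measures $\{P_{\bm{q}}\}$ of $\initmeas$ supplied by Theorem~\ref{thm:disintegration} but has push-forward $\obsmeas$ in place of $\predmeas$. Writing $g(\bm{q}):=P_{\bm{q}}(A)$ for the initial conditional probability of $A$ on the contour over $\bm{q}$, Theorem~\ref{thm:disintegration} yields the target representation $\upmeas(A)=\int_\dspace g(\bm{q})\,d\obsmeas(\bm{q})$. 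I would then put the binning measure in a parallel form: grouping the samples of Algorithm~\ref{alg:wEDFbinning} by the bin their image lands in, and using that the push-forward of $\initwemeaspart$ is $\predwemeaspart$ (by~\eqref{eq:pf_def}), one obtains
\[
\initwemeaspart(A) = \sum_{k=1}^p \predwemeaspart(\mathcal{C}^k)\,\widehat{g}_k^{\,n}(A), \qquad \widehat{g}_k^{\,n}(A) := \frac{\#\{\, i : \bm{\lambda}^i \in A \cap Q^{-1}(\mathcal{C}^k)\,\}}{n_k},
\]
so that $\initwemeaspart(A)=\int_\dspace g_p^{\,n}\,d\predwemeaspart$, where $g_p^{\,n}$ is the step function equal to the empirical bin-conditional $\widehat{g}_k^{\,n}(A)$ on $\mathcal{C}^k$.

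The argument then follows the iterated limit of Lemma~\ref{lem:optweights_converge}. For the inner limit I fix $p$ and let $n\to\infty$: since the $\bm{\lambda}^i$ are iid from $\initmeas$ and each weighted cell is a continuity set of positive initial measure, the strong law of large numbers gives $\widehat{g}_k^{\,n}(A)\to \initmeas(A\mid Q^{-1}(\mathcal{C}^k))$, while the push-forward masses $\predwemeaspart(\mathcal{C}^k)$ are fixed by the QP weights and independent of $n$; hence $g_p^{\,n}\to g_p$, the step function carrying the \emph{exact} bin-conditionals, and $\lim_{n\to\infty}\initwemeaspart(A)=\int_\dspace g_p\,d\predwemeaspart$. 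For the outer limit ($p\to\infty$), the Corollary following Theorem~\ref{thm:am_ext} gives $\predwemeaspart\to\obsmeas$ weakly, and over a refining sequence of $\predmeas$-continuity partitions the bin-conditionals $g_p$ converge to $g$ almost everywhere by a conditional-expectation (martingale/differentiation-of-measures) argument; combining these two convergences yields $\int_\dspace g_p\,d\predwemeaspart \to \int_\dspace g\,d\obsmeas=\upmeas(A)$. The hypotheses that $Q(A)$ be an $\obsmeas$-continuity set with $\obsmeas(Q(A))>0$ enter here to guarantee that the bins meeting $A$ carry the correct limiting mass and no mass is lost on bin boundaries, while Assumption~\ref{ass:clustering} and the predictability assumption (Assumption~\ref{ass:predictability}) let the almost-everywhere statements transfer between $\predmeas$ and $\obsmeas$.

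The step I expect to be the main obstacle is the outer limit, in which the integrand $g_p$ and the integrating measure $\predwemeaspart$ converge \emph{simultaneously}. Establishing $\int_\dspace g_p\,d\predwemeaspart\to\int_\dspace g\,d\obsmeas$ requires both a differentiation/martingale argument showing that the bin-averaged conditionals $g_p$ converge to the disintegration conditional $g$ as the partition refines, and control of the joint limit of a varying integrand against a weakly convergent sequence of measures, where I would lean on the uniform bound $0\le g_p,g\le 1$ and the continuity-set structure to rule out boundary losses. By contrast, the inner law-of-large-numbers step and part~(i) are routine.
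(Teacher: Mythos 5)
Part~(i) of your proposal matches the paper's (which simply cites Lemma~\ref{lem:optweights_converge}), so no issue there. For part~(ii), however, you take a genuinely different and substantially harder route than the paper, and the step you yourself flag as the ``main obstacle'' is a real gap, not just a technicality. Your plan requires two unproved ingredients for the outer limit: (a) that the bin-averaged conditionals $g_p$ converge $\obsmeas$-a.e.\ to the disintegration conditional $g(\bm{q})=P_{\bm{q}}(A)$ as $p\to\infty$, which is a martingale/Lebesgue-differentiation statement needing the partitions to be nested (or to have mesh tending to zero) --- a hypothesis that appears neither in the theorem nor in Lemma~\ref{lem:optweights_converge}, and which K-means partitions for successive $p$ do not automatically satisfy; and (b) convergence of $\int_\dspace g_p\,d\predwemeaspart$ when the integrand and the measure vary simultaneously --- weak convergence of $\predwemeaspart$ alone does not give this for discontinuous step-function integrands, and the uniform bound $0\le g_p\le 1$ is not enough by itself. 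As written, the proof cannot be completed without adding assumptions beyond those in the statement.

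The paper sidesteps all of this by never attempting to recover the full disintegration conditional pointwise. It conditions on the \emph{single} contour event $C_A:=Q^{-1}(Q(A))$ and factorizes $\initwemeaspart(A)=\initwemeaspart(A\mid C_A)\,\predwemeaspart(Q(A))$, using $A\subset C_A$ and the push-forward identity $\initwemeaspart(C_A)=\predwemeaspart(Q(A))$. The second factor converges to $\obsmeas(Q(A))>0$ by part~(i) together with the hypothesis that $Q(A)$ is an $\obsmeas$-continuity set of positive measure (this is exactly where that hypothesis is used --- not, as you suggest, to control mass on bin boundaries). The first factor converges by the SLLN to the initial conditional probability given $C_A$, which the paper identifies with $\upmeas(A\mid C_A)$ via the disintegration theorem; the law of total probability applied to $\upmeas$, together with $\upmeas(C_A)=\obsmeas(Q(A))$, then closes the argument. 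In short: your bin-by-bin decomposition and inner SLLN limit are sound and reflect the true structure of the algorithm, but the paper's two-factor decomposition is what makes the outer limit tractable, and without it (or the extra refinement hypotheses) your argument does not go through.
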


\begin{proof}
    See Appendix \ref{app:dci_binning}.
\end{proof}

Part (i) of the above theorem states that the sequence of pullback distributions constructed via Algorithm~\ref{alg:wEDFbinning} push forward to distributions that converge weakly to the observed distribution.
Part (ii) of the above theorem describes the convergence on the parameter space of these pullback distributions.
The condition that $\obsmeas(Q(A))>0$ is used to avoid conditioning on sets of zero measure.

\section{Numerical results}
\label{sec:apps}

\subsection{Convergence}
\label{sec:convergence}
To demonstrate convergence in both the data space $\mathcal{D}$ and the parameter space $\Lambda$, we return to the example introduced in Section~\ref{sec:heat_eq_ex}.
We begin with a reference set in $\Lambda$, push this set through the map $Q$, and then consider the corresponding contour set.
To this end, we take $A = [2.01,2.02] \times [0.95,1.0]\subset \Lambda$ to define $B = Q(A) \approx [0.59,0.5936]\subset\dspace$, and $Q^{-1}(B)\subset\pspace$ (see Fig.~\ref{fig:sets}).

Baseline estimates of probabilities for these sets are generated with the density-based DCI method and repeated trials utilizing distinct initial samples for each trial (the observed samples are not regenerated) to compute the average probabilities for a more accurate approximation.
We set $m=1E5$, $n = 1E5$, and utilize 10 trials to obtain $\obsmeas(B) \approx 0.26697$ and $\upmeas(A) \approx 0.01986$.

\begin{figure}
\centering
{\includegraphics[height=0.335\textwidth]{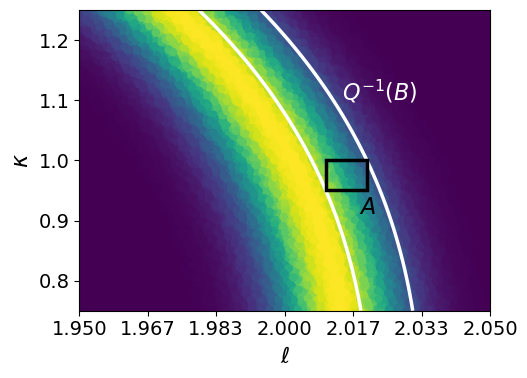}}
{\includegraphics[height=0.365\textwidth]{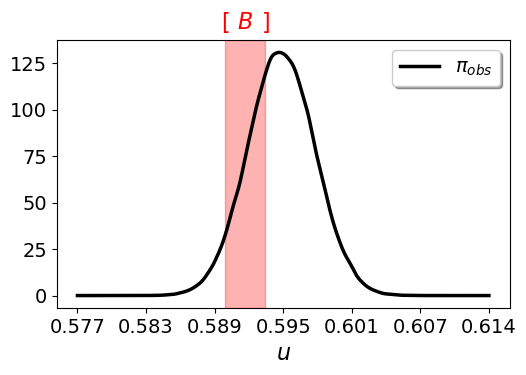}}
\caption{Sets $A$ and $Q^{-1}(B)$ in $\Lambda$ on the left. Set $B$ on the right.}
\label{fig:sets}
\end{figure}

We now utilize the set $B$ to illustrate the weak convergence for probabilities in $\mathcal{D}$ as $n,p \to \infty$ (see Theorem \ref{thm:dci_binning} (i)).
Sets of $n$ (ranging from $1E3$ to $1E4$) initial samples are drawn.
Following Algorithm \ref{alg:wEDFbinning}, each set of initial samples is drawn by \textit{appending} samples to the previous set.
Once the initial samples are drawn, we create $p$ bins (ranging from $20$ to $160$) using regular linear spacing on $\mathcal{D}$.
We then compute the optimization-based weights and estimate the probability of $B$ as the sum of weights on samples binned in $B$.
Figs.~\ref{fig:meanB} and~\ref{fig:stdB} summarize the mean error and standard deviation of these results taken over 20 trials.
Trends are observed as we move from the upper left corner (corresponding to small $n$ and $p$ values) to the lower right corner (corresponding to larger $n$ and $p$ values) in both figures.
Specifically, Fig.~\ref{fig:meanB} illustrates a trend towards more accurate approximations while Fig.~\ref{fig:stdB} illustrates a trend towards reduced variance in these estimates. 
Note that the set $A$ is quite small in comparison to the size of the parameter space, so that estimate variance is primarily driven by sample size $n$ instead of bin number $p$.
Consider, for example, that in one trial with $n=1E4$, only $89$ samples fall in $A$.

\begin{figure}
\centering
{\includegraphics[height=0.4\textwidth]{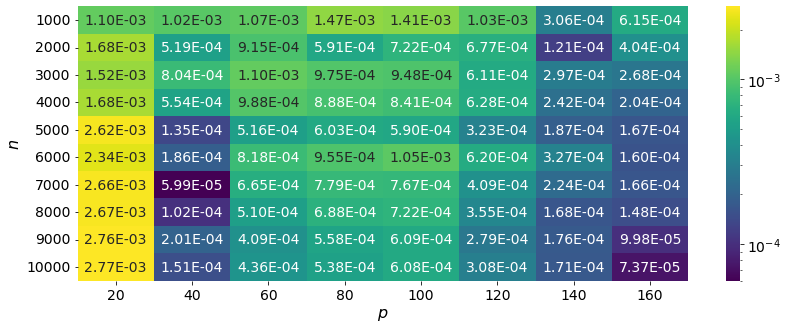}}
\caption{Absolute error between mean of $\predwemeaspart(B)$ over 20 trials and $\obsmeas(B)$.}
\label{fig:meanB}
\end{figure}

\begin{figure}
\centering
{\includegraphics[height=0.4\textwidth]{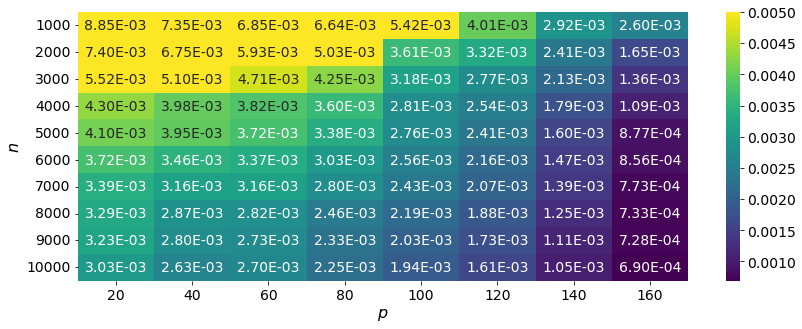}}
\caption{Standard deviation over 20 trials of $\predwemeaspart(B)$.}
\label{fig:stdB}
\end{figure}

We now illustrate the weak convergence for probability in $\pspace$ in Theorem~\ref{thm:dci_binning} (ii) with the set $A$.
Fig.~\ref{fig:meanA} shows  that as $n,p \to \infty$ the absolute error decreases.
The standard deviation of the results also decreases as $n,p\to\infty$ as shown in Fig.~\ref{fig:stdA}.

\begin{figure}
\centering
{\includegraphics[height=0.4\textwidth]{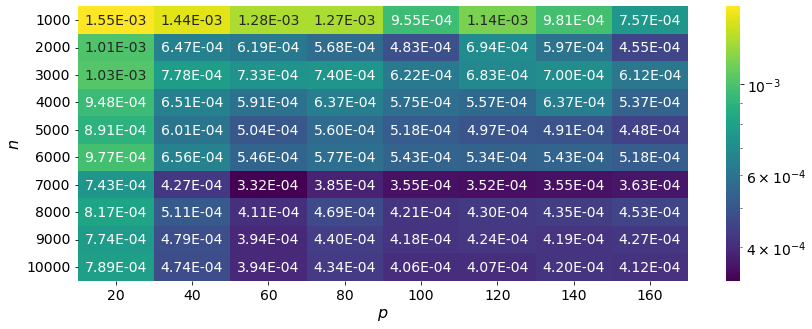}}
\caption{Absolute error between mean of $\initwemeaspart(A)$ over 100 trials and $\obsmeas(A)$.}
\label{fig:meanA}
\end{figure}

\begin{figure}
\centering
{\includegraphics[height=0.4\textwidth]{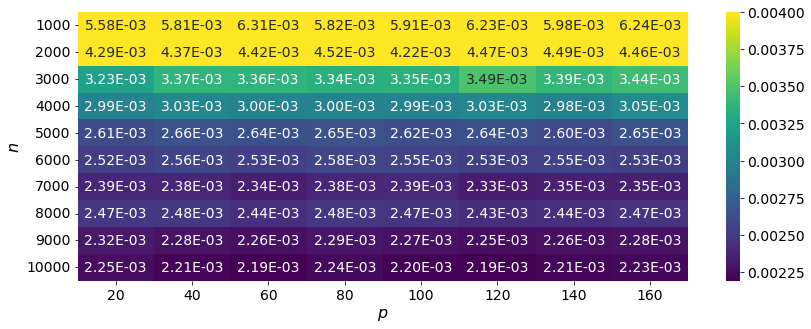}}
\caption{Standard deviation over 20 trials of $\predwemeaspart(A)$.}
\label{fig:stdA}
\end{figure}

\subsection{Fluid flow through porous media}
\label{sec:fluid_flow}
We now consider a 3D porous media model based on the SPE10 data set~\cite{SPE10}. 
The physical domain is a 3-dimensional slab: $\Omega=[0,1200]\times[0,2200]\times[0,100]$, where the coordinates are given in feet. 
The model for single phase incompressible flow is given by,
\begin{equation}\label{eq:porous}
\begin{cases}
 \mathbf{u} = -\mathbf{K} \nabla p, & x\in\Omega, \\
 \nabla \cdot \mathbf{u} = 0, & x\in\partial\Omega,
 \end{cases}
 \end{equation}
 where $\mathbf{K}$ is a heterogeneous permeability field.
 We apply a pressure drop from one side (high $y$ values) to the other (low $y$ values) by fixing the pressure along these faces and applying no-flow boundary conditions ($\mathbf{u} \cdot \mathbf{n} = 0)$ on the remaining faces. The mesh is a uniform grid of $60\times220\times50$ elements, and we use a hybridized mixed formulation~\cite{fortin1991mixed}, which results in 6.6 million degrees of freedom, where 2.2 million are global/hybrid
and 4.4 million are local/subgrid. 

The input space for the inverse problem is defined by the $3$-dimensional starting positions for a set of streamlines, generated using ParaView~\cite{ParaView}, which flow through the slab (generally in the decreasing $y$-direction).
The output space is defined by the $3$-dimensional final positions (we simplify the output space to $1$-dimension as described below) for the streamlines.
These final positions are given when the flow stops or the streamlines reach $y=0$ (i.e., when they exit the slab).
The pressure field, the magnitude of the velocity field, and the set of streamlines associated with the samples from the initial density are shown in Fig.~\ref{fig:fluid_flow_solns}.
\begin{figure}
\centering
\raisebox{-0.5\height}{\includegraphics[height=0.4\textwidth]{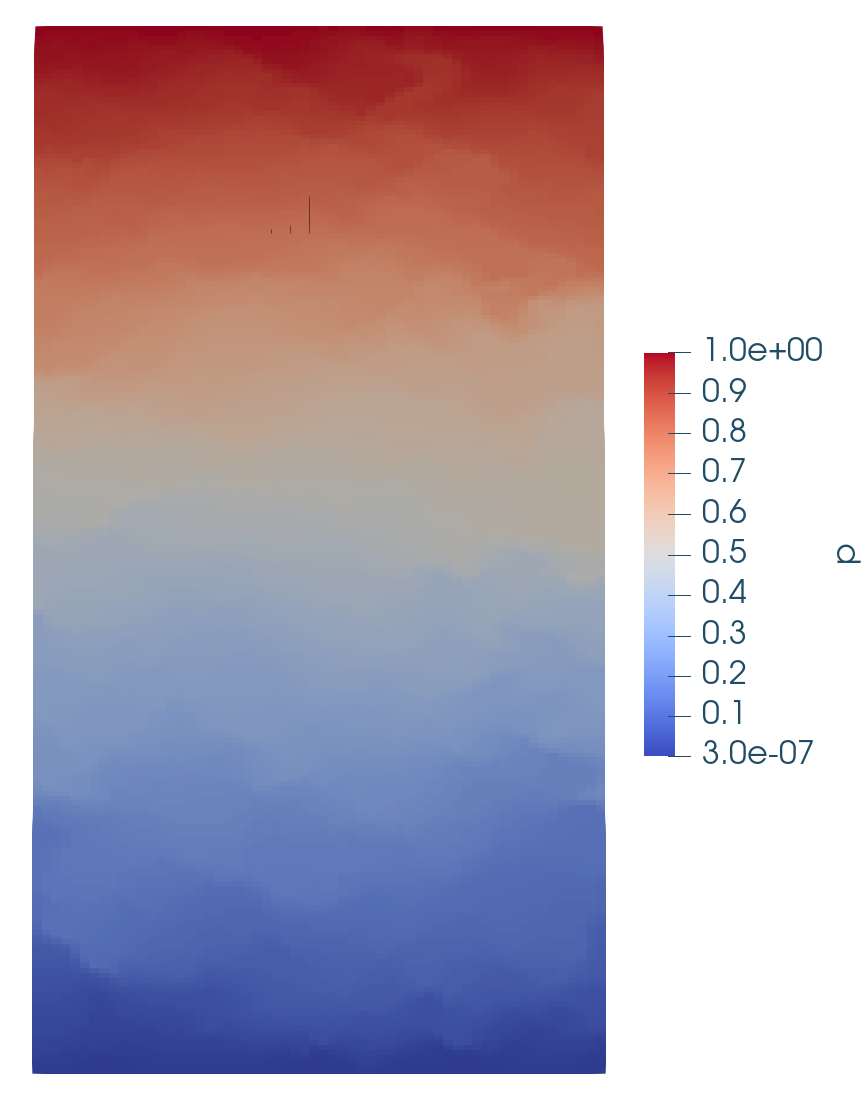}}
\raisebox{-0.5\height}{\includegraphics[height=0.4\textwidth]{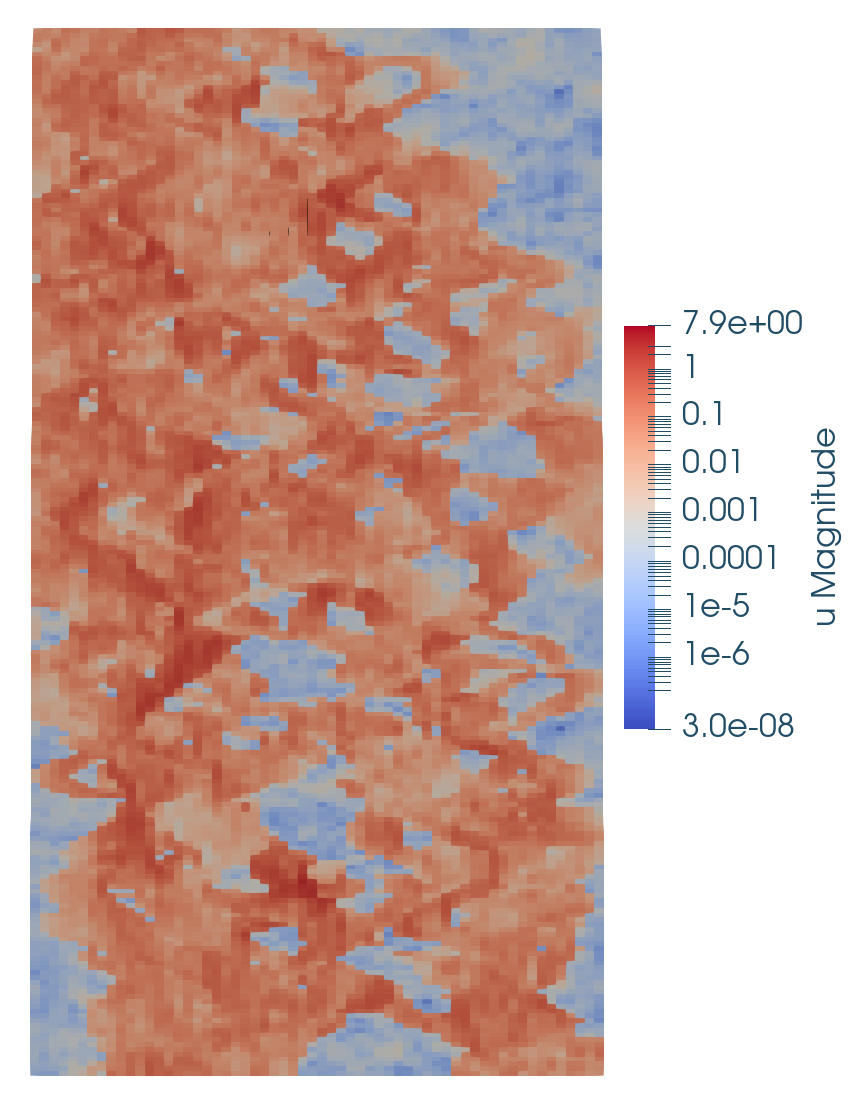}}
\raisebox{-0.5\height}{\includegraphics[height=0.4\textwidth]{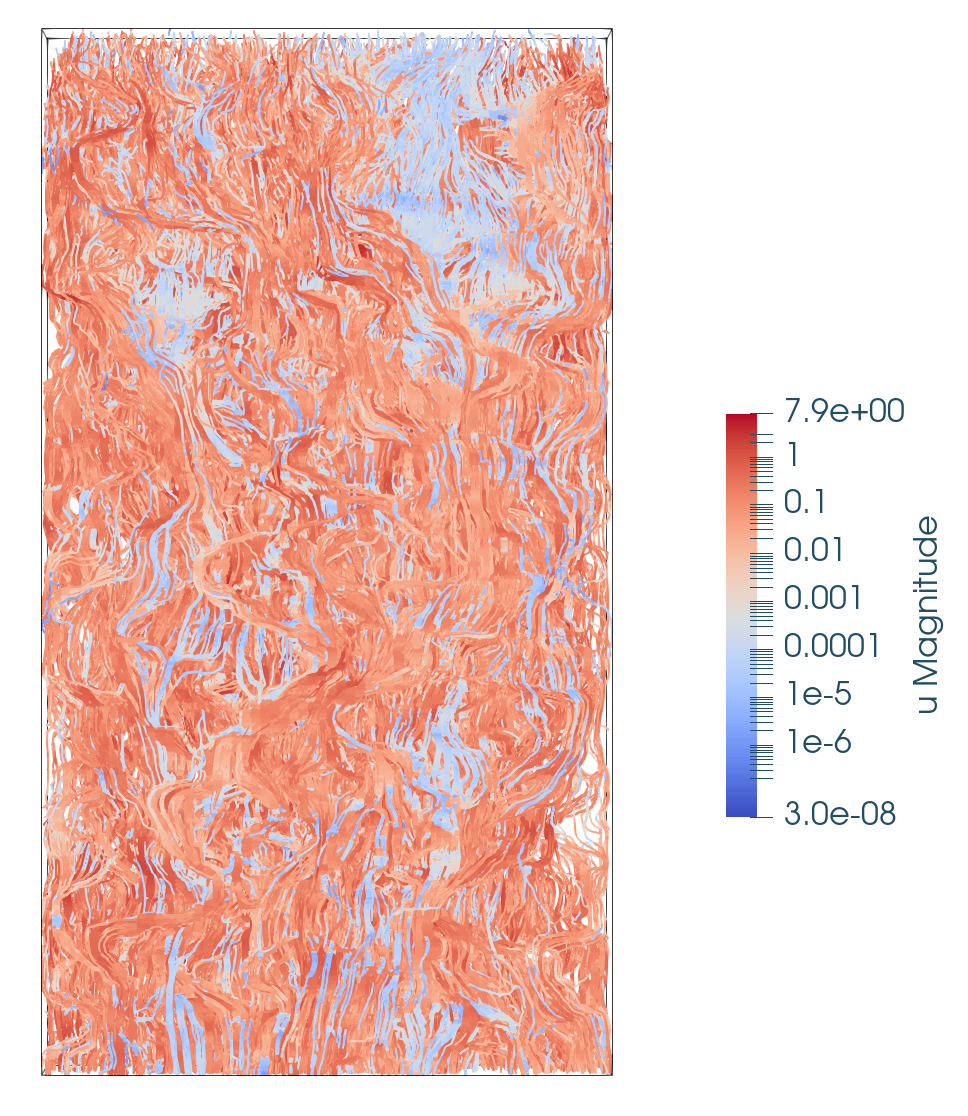}}
\caption{On the left: the numerical approximation of the pressure field.
In the middle: the numerical approximation of the magnitude of the velocity field.
On the right: The streamlines associated with the samples from the initial distribution.}
\label{fig:fluid_flow_solns}
\end{figure}
The starting and ending points of the initial samples are show on the left of Fig.~\ref{fig:fluid_flow_samples}. 
They are generated uniformly in the intersection of a $3$-dimensional sphere and the slab. 
We are interested in the initial samples that either exit or get close to exiting the slab by the end of the simulation; that is, samples for which the $y$ value of their finish positions are less than $100$. 
We then focus on the $x$ position of the streamlines,
resulting in a $1$-dimensional output space. 
The observed distribution is $\mathcal{N}(900,25)$ and we generate $1E4$ points from the observed distribution to serve as observed samples.
We illustrate each output distribution using a KDE in the right of Fig.~\ref{fig:fluid_flow_samples}.

\begin{figure}
\centering
\raisebox{-0.5\height}{\includegraphics[height=0.45\textwidth]{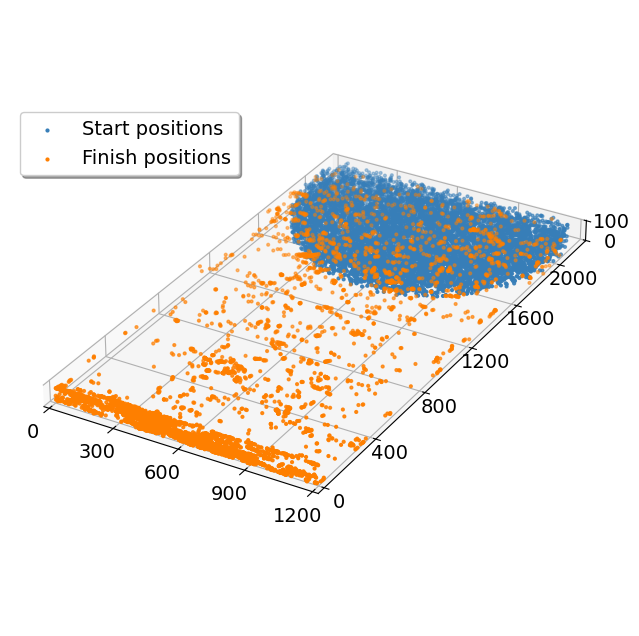}}
\raisebox{-0.5\height}{\includegraphics[height=0.35\textwidth]{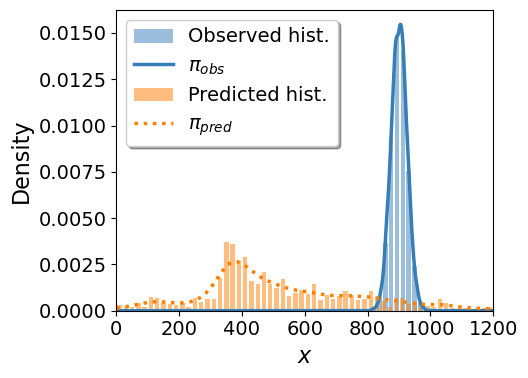}}
\caption{On the left: starting positions for the initial samples shown in blue, and the end points for these samples shown in orange. On the right: the predicted samples and KDE shown in orange, and the observed samples and KDE shown in blue.}
\label{fig:fluid_flow_samples}
\end{figure}

We apply the density method along with rejection sampling to find a set of samples from the updated density. 
The mean $r$-value of the density method is approximately $1.08$, indicating good, but not excellent, results. 
We show the accepted samples in the input space, as well as the push-forward of these samples in the data space, in Fig.~\ref{fig:fluid_flow_density_res}. 
We observe that the update samples are clustered towards a region with higher $x$-values.
This makes sense considering the spatial properties of the problem. 
In the data space, we see that the results are also reasonable, although the push-forward of the update solution does not quite match the observed.

\begin{figure}
\centering
\raisebox{-0.5\height}{\includegraphics[height=0.45\textwidth]{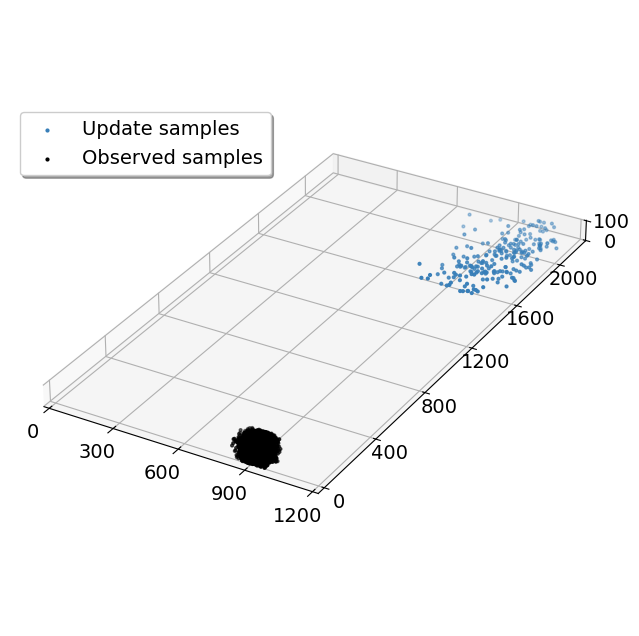}}
\raisebox{-0.5\height}{\includegraphics[height=0.35\textwidth]{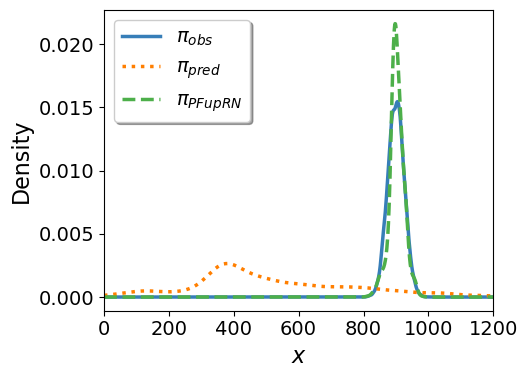}}
\caption{On the left: update samples shown in blue, observed shown in black. Because the observed samples are $1$-dimensional, we plot them with $y$ and $z$ both uniformly and randomly distributed from $0$ to $100$. On the right: push-forward results in the data space from the density method.}
\label{fig:fluid_flow_density_res}
\end{figure}

Next, we apply the K-means binning method. 
Because the majority of the probability for the observed distribution is centered in a small region of the support of the predicted, we require a relatively large number of bins, which informs our choice of $p=120$ bins. 
The results, shown as weighted EDFs on the data space, as well as the weights plotted on the initial samples, are given in Fig.~\ref{fig:fluid_flow_kmeans_res}.
We observe that the binning approach produces larger weights for the samples that are mostly clustered towards a region of  higher $x$-values, which is both what we expect and agrees with the results from the density solution. 
Unlike the density-based method that produced some clearly identifiable approximation errors in the push-forward of the updated density, the resulting CDFs show that the push-forward of the solution from the binning method appears to be identical to the observed distribution on the data space. 

\begin{figure}
\centering
\raisebox{-0.5\height}{\includegraphics[height=0.43\textwidth]{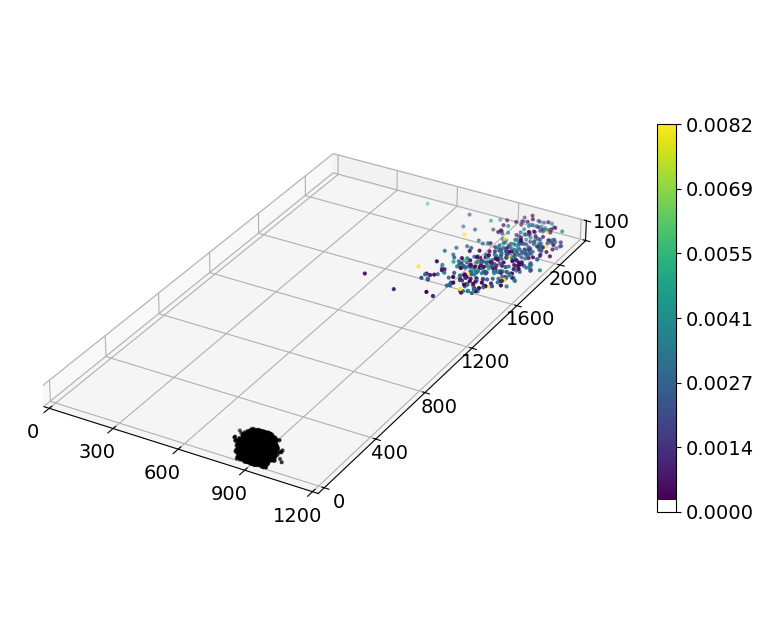}}
\raisebox{-0.5\height}{\includegraphics[height=0.32\textwidth]{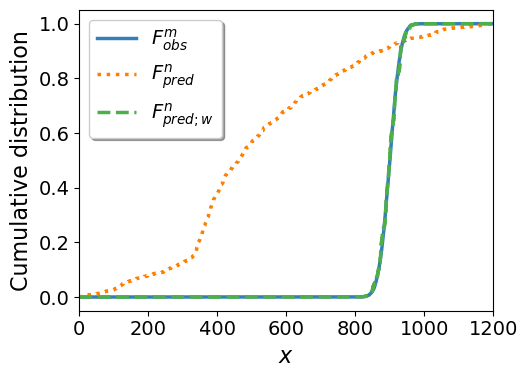}}
\caption{On the left: solution weights from the kmeans binning method shown in color, observed shown in black. For readability, and because the majority of the weights are very close to zero, we remove all samples where the weights are less than $1E-4$ from the plot. Because the observed samples are $1$-dimensional, we plot them with $y$ and $z$ both uniformly and randomly distributed from $0$ to $100$. On the right: push-forward results in the data space from the kmeans binning method.}
\label{fig:fluid_flow_kmeans_res}
\end{figure}

\section{Conclusions}
\label{sec:conclusions}

In this work, we develop a distributions-based approach for solving data-consistent stochastic inverse problems. 
While previous approaches to solving such problems require approximations of events or densities, this work provides a rigorous methodology that applies to cases with limited observational or simulation/prediction data as well as situations where densities and events cannot be approximated effectively.
This builds upon prior work by incorporating a binning approach in the data space to properly distribute probabilistic weights in the input space.
Proofs of convergence of the distributions-based solution to the density-based solution as the number of samples and bins increase are provided.

In future work, we aim to formalize a theoretical proof of the stability of the pullback measure that is defined by the weighted EDF on the parameter space, as well as a theoretical proof to justify the generalization of the method to more arbitrary input sample sets (i.e., not just those generated according to a specified initial distribution). 
We are also interested in quantifying the effect of the model form errors on the weighted EDFs and the corresponding approximations to the pullback measure. 
Future directions will also include a utilization of this approach in the context of optimal experimental design and an exploration of the use of this approach to efficiently perform hierarchical Bayesian inference.

\section{Code}
\label{sec:code}
Code to reproduce the figures and examples in this paper is contained in the GitHub repository~\cite{distDCI} for this paper.

\section{Acknowledgments}
\label{sec:ack}
T.~Butler's work is supported by the National Science Foundation under Grant No.~DMS-2208460 as well as by the NSF IR/D program, while working at National Science Foundation. 
However, any opinion, finding, and conclusions or recommendations expressed in this material are those of the author and do not necessarily reflect the views of the National Science Foundation.
T.~Wildey's work is supported by the Office of Science, Early Career Research Program.
This paper describes objective technical results and analysis. Any subjective views or opinions that might be expressed in the paper do not necessarily represent the views of the U.S. Department of Energy or the United States Government.

\bibliographystyle{siamplain}
\bibliography{references}

\appendix

\section{Proof of Lemma \ref{lem:cont_ptz}}
\label{app:cont_ptz}

Before beginning the proof, we recall again the notions of ``greater than'' and ``less than'' in multivariate spaces.
Specifically, we use the standard notation $\succeq$ and $\preceq$ to indicate component-wise greater than and component-wise less than, respectively; i.e., $\bm{x} =(x_1,\ldots, x_d)\succeq \bm{y}=(y_1,\ldots,y_d)$ if and only if $x_1 > y_1, \dots, x_d > y_d$.
If a function is ``nondecreasing'' we mean this in the component-wise sense: $F(\bm{x}) \ge F(\bm{y})$ if $\bm{x} \succeq \bm{y}$.

\begin{proof}
Let $\bm{x}$ be a continuity point of $F$. 
Let $A = \{\bm{z} : F_n(\bm{z}) \to F(\bm{z})\}$, which we refer to as an ``a.e.~set'' meaning the Lebesgue measure of the complement of $A$ is zero. 
For $\delta>0$, define $\bm{x}-\delta$ as the point $(x_1-\delta,\ldots,x_d-\delta)$.
Then, $\{\bm{z} : \bm{x}-\delta \preceq \bm{z} \preceq \bm{x}\}$ defines a $d$-dimensional cube of Lebesgue measure $\delta^d>0$.
Since $A$ is an a.e.~set, there exists an (uncountably) infinite number of points in $A\cap \{\bm{z} : \bm{x}-\delta \preceq \bm{z} \preceq \bm{x}\}$.
It is therefore possible to choose a  sequence $\{\bm{l}_i\}_{i=1} \subset A \cap \{\bm{z} : \bm{z} \preceq \bm{x}\}$
such that $\bm{l}_i \to \bm{x}$.
An analogous argument implies we can also choose a sequence $\{\bm{u}_i\}_{i=1} \subset A \cap \{\bm{z} : \bm{z} \succeq \bm{x}\}$ such that $\bm{u}_i \to \bm{x}$.
Because each $F_n$ is nondecreasing in the component-wise sense, for any $n \in \mathbb{N}$ and any $i \in \mathbb{N}$, we have
\begin{equation*}
F_n(\bm{l}_i) \le F_n(\bm{x}) \le F_n(\bm{u}_i).
\end{equation*}
The goal is to prove $F_n(\bm{x})\to F(\bm{x})$.
To do this, we first observe that
\begin{equation*}
    \liminf_{n\to\infty} F_n(\bm{l}_i) \le \liminf_{n\to\infty} F_n(\bm{x}) \le \liminf_{n\to\infty} F_n(\bm{u}_i)
\end{equation*}
and
\begin{equation*}
    \limsup_{n\to\infty} F_n(\bm{l}_i) \le \limsup_{n\to\infty} F_n(\bm{x}) \le \limsup_{n\to\infty} F_n(\bm{u}_i).
\end{equation*}

Now, as $n \to \infty$, $F_n(\bm{l}_i) \to F(\bm{l}_i)$ because $\bm{l}_i \in A$ for each $i$, and similarly $F_n(\bm{u}_i) \to F(\bm{u}_i)$ for each $i$. Thus, 
\begin{equation*}
F(\bm{l}_i) = \lim_{n\to\infty}F_n(\bm{l}_i) \le \liminf_{n\to\infty}F_n(\bm{x}) 
\end{equation*}
and
\begin{equation*}
 \limsup_{n\to\infty}F_n(\bm{x}) \le \lim_{n\to\infty}F_n(\bm{u}_i) = F(\bm{u}_i).
\end{equation*}
As $i \to \infty$, by construction we have $\bm{l}_i \to \bm{x}$ and $\bm{u}_i \to \bm{x}$, and $\bm{x}$ is a continuity point of $F$, so we have
\begin{equation*}
\lim_{i\to\infty} F(\bm{l}_i) = F(\bm{x}) \le \lim_{i \to \infty}\left(\liminf_{n\to\infty}F_n(\bm{x})\right) = \liminf_{n\to\infty}F_n(\bm{x})
\end{equation*}
and
\begin{equation*}
\lim_{i \to \infty}\left(\limsup_{n\to\infty}F_n(\bm{x})\right) = \limsup_{n\to\infty}F_n(\bm{x}) \le F(\bm{x}) = \lim_{i\to\infty} F(\bm{u}_i).
\end{equation*}
Thus, we have found
\begin{equation*}
F(\bm{x}) \le \liminf_{n\to\infty}F_n(\bm{x})  \le \limsup_{n\to\infty}F_n(\bm{x}) \le F(\bm{x})
\end{equation*}
which implies that $\lim_{n\to\infty} F_x(\bm{x})$ exists and is equal to $F(\bm{x})$. Since $\bm{x}$ was an arbitrary continuity point of $F$, the conclusion follows.
\end{proof}

\section{Proof of Theorem \ref{thm:am_ext}}
\label{app:am_ext}

Before we prove this theorem, we recall a useful result connecting the convergence of subsequences of subsequences to the original sequence that is straightforward to prove but can seem confusing when used without context. 

\begin{lemma}\label{lem:subsub}
Let $\{p_n\}_{n\in\mathbb{N}}$ be a sequence of real numbers.
If there exists $p \in \mathbb{R}$ such that for any subsequence $\{p_{n_k}\}_{k\in\mathbb{N}}$ there exists a further subsequence $\{p_{n_{k_l}}\}_{l \in\mathbb{N}}$ that converges to $p$, then $p_n \to p$.
\end{lemma}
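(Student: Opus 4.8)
The plan is to argue by contradiction, exploiting the fact that the hypothesis must hold for \emph{every} subsequence and in particular for a cleverly chosen one. Suppose, for contradiction, that $p_n \not\to p$. By the negation of the definition of convergence, there exists some $\varepsilon > 0$ such that the set $\{n : |p_n - p| \ge \varepsilon\}$ is infinite. Enumerating this set in increasing order produces a subsequence $\{p_{n_k}\}_{k\in\mathbb{N}}$ with the property that $|p_{n_k} - p| \ge \varepsilon$ for every $k$.

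The key observation is then immediate: any further subsequence $\{p_{n_{k_l}}\}_{l\in\mathbb{N}}$ of $\{p_{n_k}\}_{k\in\mathbb{N}}$ inherits the bound $|p_{n_{k_l}} - p| \ge \varepsilon$ for all $l$, since each of its terms is one of the $p_{n_k}$. Consequently no such further subsequence can converge to $p$, because convergence to $p$ would force $|p_{n_{k_l}} - p| < \varepsilon$ for all sufficiently large $l$. This directly contradicts the hypothesis, which guarantees that the subsequence $\{p_{n_k}\}_{k\in\mathbb{N}}$ admits a further subsequence converging to $p$. Hence the assumption $p_n \not\to p$ is untenable, and we conclude $p_n \to p$.

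I do not anticipate a genuine obstacle here; the result is a standard fact from elementary real analysis. The only step requiring any care is the first one, namely correctly negating the statement ``$p_n \to p$'' to produce a single $\varepsilon$ working for infinitely many indices, so that the extracted subsequence stays uniformly bounded away from $p$. Once that subsequence is in hand, the contradiction is purely formal, since the defining inequality is preserved under passing to any further subsequence.
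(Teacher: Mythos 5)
Your proof is correct and follows essentially the same route as the paper's: negate convergence to obtain a single $\varepsilon>0$ with $|p_n - p|\ge\varepsilon$ for infinitely many $n$, extract the subsequence of those indices, and observe that no further subsequence of it can converge to $p$, contradicting the hypothesis. The only cosmetic difference is that you enumerate the infinite index set directly while the paper builds the same subsequence by induction.
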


While Lemma~\ref{lem:subsub} is a standard result in analysis, the proof can be difficult to find as it is often left as an exercise.
We provide the proof below for ease of reference.
\begin{proof}
Suppose that $(p_n)$ does not converge to $p$.
Then there exists a $\varepsilon > 0$ such that for every $N \in \mathbb{N}$ there exists $n \ge N$ such that $\abs{p_n - p} \ge \varepsilon$.
Choose such an $\varepsilon > 0$, and inductively construct a subsequence $(p_{n_k})$ as follows. For $k = 1$, $n_1 \ge 1$ such that $\abs{p_{n_1} - p} \ge \varepsilon$, for $k = 2$, choose $n_2 \ge n_1 + 1$ such that $\abs{p_{n_2} - p} \ge \varepsilon$. Assume that we have chosen the first $k$ terms in the sequence, and then choose $n_{k+1} \ge n_k$ such that $\abs{p_{n_{k+1}} - p} \ge \varepsilon$.
This defines a subsequence $(p_{n_k})$ such that $\abs{p_{n_k}-p} \ge \varepsilon$ for all $k \in \mathbb{N}$. By assumption, there must exist a further subsequence $(p_{n_{k_\ell}})$ such that $p_{n_{k_\ell}} \to p$, but by construction all terms in any subsequence are at least a $\varepsilon$ distance from $p$, a contradiction.
\end{proof}

We can finally proceed in proving Theorem~\ref{thm:am_ext} below. 

\begin{proof}
Let $\{F_{n_k}\}_{k\in\mathbb{N}}$ be a subsequence of $\{F_n\}_{n\in\mathbb{N}}$. 
Since $F_n \to F$ in $L^p$, $F_{n_k} \to F$ in $L^p$ as well.
Thus, there exists a further subsequence $\{F_{n_{k_l}} \}_{l\in\mathbb{N}}$, that converges a.e.~to $F$. 
By Lemma \ref{lem:cont_ptz}, for any continuity point $\bm{x}$ of $F$, the subsubsequence $F_{n_{k_\ell}}(\bm{x})\to F(\bm{x})$. 
By Lemma \ref{lem:subsub}, $F_n(\bm{x}) \to F(\bm{x})$ at all continuity points of $F$. 
Finally, by Lemma \ref{lem:conv_dist}, $P_n \to P$ weakly.
\end{proof}

\section{Proof of Theorem \ref{thm:dci_binning}}
\label{app:dci_binning}

\begin{proof}

The proof of (i) follows immediately from Lemma~\ref{lem:optweights_converge}. 

To prove (ii), first we let $C_A:=Q^{-1}(Q(A))$ denote the contour event in $\pborel$ induced by the set $A$.
We apply the law of total probability to write
\begin{equation*}
    \initwemeaspart(A) = \initwemeaspart(A \, | \, C_A) \initwemeaspart(C_A) + \initwemeaspart(A \, | \, C_A^c) \initwemeaspart(C_A^c),
\end{equation*}
where $C_A^c$ denotes the complement of $C_A$. 
Since $A \subset C_A$, $A$ and $C_A^c$ are disjoint, which implies $\initwemeaspart(A\, | \, C_A^c)=0$.
Since $\initwemeaspart(C_A) = \predwemeaspart(Q(A))$, we have
\begin{equation*}
    \initwemeaspart(A) = \initwemeaspart(A \, | \, C_A) \predwemeaspart(Q(A)).
\end{equation*}

From (i), we have $\predwemeaspart(Q(A))\to \obsmeas(Q(A))>0$ as $n,p\to\infty$.
We use this along with the method of $n$ being chosen as a function of $p$ in Algorithm~\ref{alg:wEDFbinning} to observe that for sufficiently large $p$, $\predwemeaspart(Q(A))>0$. 
By the definition of $C_A$, it follows that both $\upmeas(C_A)>0$, and, for sufficiently large $p$, $\predwemeaspart(C_A)>0$.
Because the samples $\set{\bm{\lambda}^j}_{j=1}^n$ used to construct $\initwemeaspart$ are iid, the SLLN implies $\initwemeaspart(A\, | \, C_A) \to \initmeas(A\, | \, C_A)$.
Since $\upmeas$ and $\initmeas$ have the same conditional probabilities on contour events via the disintegration theorem, we have
\begin{equation*}
     \lim_{n,p\to\infty}\initwemeaspart(A\, | \, C_A) \to \\\initmeas(A\, | \, C_A) = \upmeas(A\, | \, C_A).
\end{equation*}
It follows that
\begin{equation*}
    \initwemeaspart(A) \to \upmeas(A\, | \, C_A) \obsmeas(Q(A)) \text{ as } n,p \to \infty.
\end{equation*}
The result follows by recognizing that $\obsmeas(Q(A)) = \upmeas(C_A)$ and applying the law of total probability to $\upmeas$ analogous to how it was applied to $\initwemeaspart$ above.
\end{proof}

\end{document}